\renewcommand\neq{\mathrel{\vphantom{|}\mathpalette\xsneq\relax}}
\newcommand\xsneq[2]{%
  \ooalign{\hidewidth$#1|$\hidewidth\cr$#1=$\cr}%
}
\numberwithin{equation}{section}
\newtheorem{theorem}{Theorem}[section]
\newtheorem{lemma}{Lemma}[section]
\theoremstyle{plain}
\newtheorem{definition}[theorem]{Definition}
\newtheorem{problem}{Problem}[section]
\begin{document}
\title{\textbf{Comparison of fractional-order generalized wavelets and orthonormal wavelets methods for  solving multi-dimensional fractional optimal control problems } }
\author{Akanksha Singh*, S. Saha Ray, \\
\textit{Department of Mathematics}\\
\textit{National Institute of Technology Rourkela}\\
\textit{Rourkela-769008, India}\\
\textit{*19singhac@gmail.com}}

\maketitle

\begin{abstract} 
This paper presents an efficient numerical technique for solving multi-dimensional fractional optimal control problems using fractional-order generalized Bernoulli wavelets. The numerical results obtained by this method have been compared with the results obtained by the method using orthonormal Bernoulli wavelets. Using fractional-order generalized Bernoulli wavelets, product and integral operational matrices have been obtained. By using these operational matrices, the multi-dimensional fractional optimal control problems have been reduced into a system of algebraic equations. To confirm the efficiency of the proposed numerical technique involving fractional-order generalized Bernoulli wavelets, numerical problems have been solved by using both orthonormal Bernoulli wavelets and fractional-order generalized Bernoulli wavelets and obtaining an approximate cost function value, which has been found by approximating state and control functions. In addition, the convergence rate and error bound of the proposed numerical method have also been derived.
\\
\\
\textbf{Keywords:}    Orthonormal Bernoulli wavelets, fractional Bernoulli wavelets, Caputo fractional derivative, Riemann-Liouville fractional integral, wavelet basis, operational matrix.
\\
\textbf{Mathematics Subject Classification:}  49J15, 49N10, 65T60, 26A33.\\ 
\textbf{PACS Numbers:} 43.60.Hj, 02.30.Rz.\\

\end{abstract}
\numberwithin{equation}{section}
    
\section{Introduction}
Fractional-order calculus plays an important role in thermodynamics
\cite{meilanov2014thermodynamics}, viscoelasticity \cite{koeller1984applications}, rheology \cite{bagley1983theoretical}, viscoelastic and viscous-viscoelastic dampers \cite{ray2016formulation}, electrical circuits theory \cite{sikora2017fractional}, mechatronics \cite{morar2023cascade}, signal processing \cite{chen2007overview}, control theory \cite{matuvsuu2011application}, chaos theory \cite{ odibat2017chaos}, biological system \cite{rihan2013numerical} and other applications \cite{tarasova2016elasticity}.
 Optimal control is the process of determining control and state functions for a dynamic system over a period of time to minimize or maximize a cost function. Despite the fact that optimal control theory has been studied for many years, fractional optimal control theory is a relatively new branch of mathematics. Various definitions of fractional derivatives with singular and non-singular kernels can be applied to define a multi-dimensional fractional optimal control problems (FOCPs). Depending on the characteristics of the system being modelled, either a singular or non-singular kernel can be used to define fractional derivatives\cite{tuan2020numerical,odibat2023new,siddique2023analysis}
. Singular kernels are useful for systems with long-range interactions and memory effects, while non-singular kernels are useful for systems with short-range interactions and smooth dynamics.
 In the 21st century, optimal control has applications in robotics
, finance, and biomedical engineering \cite{mashayekhi2018approximate,agrawal1989general,agrawal2004general,
sabermahani2021fibonacci}. The capability of optimal
 control theory has been further improved by developing advanced 
computing tools and optimization algorithms \cite{heydari2018new,ghanbari2022numerical,barikbin2020solving}. In  reality, the
 behavior of many physical systems is governed more accurately by
  fractional order dynamics instead of integer order ones  \cite{herrmann2011fractional}. Various numerical and analytical methods have been used in recent years to solve different kinds of FOCPs \cite{bhrawy2017solving,dehestani2020fractional,
  ghaderi2021solving,sahu2018comparison}. Wavelets are extremely effective methods that are applied in a variety of numerical techniques. Most applications of the wavelet concepts are found in mathematics and engineering. Wavelets have been shown to be useful in a wide range of contexts, and they are especially used in signal analysis \cite{saha2020numerical,behera2022two,ray2019novel,ray2023two}. 
The operational matrix of fractional integration, the fractional Bernoulli wavelets basis, function approximation, and the Lagrange multiplier approach have been used in this study to solve a specific multi-dimensional FOCP. A typical multi-dimensional FOCP has been written as follows:

\begin{align}
 \min\Tilde{J}  =  \int_{0}^{1}{\mathcal{F}}({\zeta}, x_{1}({\zeta}), x_{2}({\zeta}),  u({\zeta}))d{\zeta},\label{1.1}
\end{align}
\begin{align}  
\mathcal{G}_{1}\left({^{C}}\mathcal{D}_{{\zeta}}^{{\mu}}x_{1}({\zeta}),{^{C}}\mathcal{D}_{{\zeta}}^{{\mu}}x_{2}({\zeta}) \right)= \mathcal{H}_{1}({\zeta}, x_{1}({\zeta}), x_{2}({\zeta}), u({\zeta})),\label{1.2}
\end{align}
\begin{align}  
\mathcal{G}_{2}\left({^{C}}\mathcal{D}_{{\zeta}}^{{\mu}}x_{1}({\zeta}),{^{C}}\mathcal{D}_{{\zeta}}^{{\mu}}x_{2}({\zeta}) \right)= \mathcal{H}_{2}({\zeta}, x_{1}({\zeta}), x_{2}({\zeta}), u({\zeta})),\label{1.3}
\end{align}
\begin{align}
    x_{1}(0)= x_{1},  x_{2}(0)= x_{2}.\label{1.4}
\end{align}
With this proposed method, the original multi-dimensional FOCP has been converted into a set of linear equations. The fractional derivative of state and control functions have been expanded using the fractional Bernoulli wavelets basis. Finally, the constrained extremum technique is applied, which usually  connecting the constraint equation derived from the given dynamical system to the cost function using a set of unknown Lagrange multipliers. Furthermore, the error estimation and convergence analysis of the proposed numerical technique have been established. 
\\

The contents of this paper are as follows: Section 2 discusses the definitions of fractional derivatives and integrals. Wavelets   are discussed in Section 3. Section 4 describes function approximation. Section 5 illustrates the operational matrix of integration for the fractional Bernoulli wavelet. In Section 6, a numerical approach has been proposed for solving the FOCP. Section 7 explains the estimation of the error. In Section 8, the convergence of the proposed fractional Bernoulli wavelets method has been established. In Section 9, numerical experimental results show the accuracy and efficiency of the proposed numerical scheme. Finally, Section 10 comes to an end with a concluding remark.

\section{ Preliminaries }
In this section, some important definitions of fractional calculus are provided.

\begin{definition}[Riemann-Liouville integral]
The Riemann-Liouville fractional integral of a function $\Tilde{\mathcal{Z}}( \sigma)$ of order $\mu > 0$ is defined  \textnormal{\cite{oldham20061974}} as follows:
\begin{equation}
 {_0}{}\mathcal{I}_{{\sigma}}^{\mu} \Tilde{\mathcal{Z}}( \sigma) =\dfrac{1}{\Gamma(\mu)}  \int_{0}^{\sigma} (\sigma-\varpi)^{\mu-1}\Tilde{\mathcal{Z}}(\varpi)d\varpi, \ \ \ \ \     \mu> 0, \sigma > 0. \label{2.1}
\end{equation}

\end{definition}

\begin{definition}[Caputo fractional derivative ]
Caputo fractional derivative was introduced by M. Caputo in 1967. The Caputo fractional derivative of a function $\Tilde{\mathcal{Z}}({\sigma})$ of order $\mu$ is defined \textnormal{ \cite{siddique2023analysis}} as follows:

\begin{equation}
{_{\sigma_{0}}^C}\mathcal{D}_{\sigma}^{\mu}\Tilde{\mathcal{Z}}(\sigma)=
\begin{cases} 
\dfrac{1}{\Gamma(1-\mu)}\displaystyle\int_{\sigma_{0}}^{\sigma}{(\sigma-\varpi)^{-\mu}}\frac{d\Tilde{\mathcal{Z}}(\varpi)}{d\varpi}d\varpi, \ \ & 0<\mu<1, \\  
\mathcal{Z}'(\sigma), & \mu=1.
\end{cases}\label{2.2} 
\end{equation}
\end{definition}

\section{Wavelets}
This section gives an overview of orthonormal Bernoulli wavelets and fractional Bernoulli wavelets.
\subsection{Orthonormal Bernoulli wavelets}

Let $k$ be a positive integer. For each $n =1, 2,  \dots, 2^{k-1}$ and non-negative integer $m$, the orthonormal Bernoulli wavelets (OBWs) $\psi_{n, m}(\zeta)$, are defined on the interval $[0, 1)$ as
\begin{equation}
\psi_{n,m}(\zeta)=
\begin{cases}

2^{\frac{k-1}{2}} \Tilde{\mathcal{B}}_{m}(2^{\frac{k-1}{2}}\zeta-n+1),  \quad if \quad \dfrac{n-1}{2^{k-1}}\le \zeta <\dfrac{n}{2^{k-1}},\\
0, \quad  \quad  \quad  \quad \quad \quad  \quad  \quad \quad \quad  \quad  \quad \quad otherwise, \label{3.1}
\end{cases}
\end{equation}
where $\Tilde{\mathcal{B}}_{0}(\zeta)=1$ and\\
 $$\Tilde{\mathcal{B}}_{m}({\zeta})=\dfrac{B_{m}({\zeta})}{\Upsilon_{m} },\quad  m>0,$$
 and $\Upsilon_{m}= \sqrt{\dfrac{(-1)^{m-1}(m!)^2}{(2m)!}B_{2m}(0)}$ is the normality coefficient. $B_{m}({\zeta}), m=0,1,2,\dots,M-1$ are known as Bernoulli polynomial and given by
 $$B_{m}(\zeta)=\sum_{j=0}^{m} \binom{m}{j}B_{m-j}(0){\zeta}^{j},$$
 where $ B_{j}(0)$'s are Bernoulli numbers. Therefore, Bernoulli wavelets for $ m > 0$ can be rewritten as
\begin{align}
\psi_{{n},{m}}(\zeta) = {\xi}_{m} \sum_{j=0}^{m}\binom{m}{j}B_{m-j}(0) 2^{(k-1)j}\left({\zeta}-\dfrac{n-1}{2^{k-1}}\right)^{j} \chi({\zeta})\Big\vert_{{\zeta}\in \left[\frac{{n}-1}{2^{{k}-1}}, \frac{{n}}{2^{{k}-1}}\right)},\label{3.2}
\end{align} 
where ${\xi}_{m}=\sqrt{\dfrac{2^{k-1}(2m)!}{(-1)^{m-1}(m!)^2B_{2m}(0)}}$ and
$\chi({\zeta})\Big\vert_{{\zeta}\in \left[\frac{{n}-1}{2^{{k}-1}}, \frac{{n}}{2^{{k}-1}}\right)}$  is the characteristic  function defined as follows:
\begin{equation}
\chi({\zeta})\Big\vert_{{\zeta}\in \left[\frac{{n}-1}{2^{{k}-1}}, \frac{{n}}{2^{{k}-1}}\right)}=
\begin{cases}
1, ~ ~ ~ ~ ~ ~{\zeta}\in \left[\frac{{n}-1}{2^{{k}-1}}, \frac{{n}}{2^{{k}-1}}\right),\\
0, ~ ~ ~ ~ ~ ~ ~ ~ ~ ~ otherwise.\label{3.3}
\end{cases}
\end{equation}
When $m=0$, Eq. \eqref{3.2} becomes $$\psi_{{n},{0}}({\zeta})=2^{\left(\frac{k-1}{2}\right)} \chi({\zeta})\Big\vert_{{\zeta}\in \left[\frac{{n}-1}{2^{{k}-1}}, \frac{{n}}{2^{{k}-1}}\right)}.$$
\subsection{Fractional Bernoulli wavelets}
By applying the transformation ${\zeta}={x}^{\mu}$ to Eq. \eqref{3.1} for a positive real number ${\mu}$, the fractional Bernoulli wavelets (FBWs) are defined \cite{nosrati2021using} as follows:
\begin{align}
\psi_{n,m}^{{\mu}}({x})=
\begin{cases}
2^{\frac{k-1}{2}} \Tilde{\mathcal{B}}_{m}(2^{\frac{k-1}{2}}{x}^{{\mu}}-n+1),  \quad if \quad \left(\dfrac{n-1}{2^{k-1}}\right)^{\frac{1}{{\mu}}}\le {x} <\left(\dfrac{n}{2^{k-1}}\right)^{\frac{1}{{\mu}}},\\
0, \quad  \quad  \quad  \quad \quad \quad  \quad  \quad \quad \quad  \quad  \quad \quad otherwise. \label{3.4}
\end{cases}
\end{align}
By using  Eq. \eqref{3.2}, FBWs are defined as
\begin{align}
\psi_{{n},{m}}^{{\mu}}({\zeta}) = {\xi}_{m} \sum_{j=0}^{m}\binom{m}{j}B_{m-j}(0) 2^{(k-1)j}\left({\zeta}^{{\mu}}-\dfrac{n-1}{2^{k-1}}\right)^{j} \chi({\zeta})\Big\vert_{{\zeta}\in  \left[\left(\frac{{n}-1}{2^{{k}-1}}\right)^\frac{1}{{\mu}}, \left(\frac{{n}}{2^{{k}-1}}\right)^\frac{1}{{\mu}}\right)}.\label{3.5}
\end{align}

\section{Function approximation and product operational matrix}
In this section, function approximation and product operational matrix of FBWs are given below.
\subsection{Function approximation }
A function $f({\zeta})$, square integrable in $[0,1]$,  can be expressed in terms of the wavelet as
\begin{equation}
f({\zeta})\approx\sum_{{n}=1}^{2^{{k}-1}}\sum_{{m}=0}^{M-1}\Hat{c}_{{n} {m}}\psi_{{n}, {m}}^{{\mu}}({\zeta})=\Hat{C}^{{T}}\Hat{\Psi}^{{\mu}}({\zeta}), \label{4.1}
\end{equation}
where $\Hat{C}$ and $\Hat{\Psi}^{{\mu}}({\zeta})$ are column vectors of dimension $(2^{{k}-1}M\times1)$ given by
\begin{align}\nonumber
\Hat{C}^{T}=&[\Hat{c}_{1 0}, \Hat{c}_{1 1},  \dots \Hat{c}_{1 M-1},  \dots, \Hat{c}_{2^{{k}-1} M-1}]\\
 =&[\Hat{c}_{1}, \Hat{c}_{2},  \dots, \Hat{c}_{\Hat{m}}],\label{4.2}
\end{align}
and
\begin{align}\nonumber
 \Hat{\Psi}^{{\mu}}({\zeta}) &=[\psi_{1, 0}^{{\mu}}({\zeta}), \psi_{1, 1}^{{\mu}}({\zeta}), \dots, \psi_{1, M-1}^{{\mu}}({\zeta}), \dots, \psi_{2^{{k}-1}, M-1}^{{\mu}}({\zeta})] \\
&=[\Psi_{1}^{{\mu}}({\zeta}), \Psi_{ 2}^{{\mu}}({\zeta}),  \dots,  \Psi_{\Hat{m}}^{{\mu}}({\zeta})],\label{4.3}
\end{align} 
where $\Hat{m}=2^{{k}-1}M.$\\
The coefficient vector $\Hat{C}$ can be obtained from Eq. \eqref{4.1} as follows:
\begin{equation}
\Hat{C}^{T}=\Bigg(\int_0^1f({\zeta})\left(\Hat{\Psi}^{{\mu}}({\zeta})\right)^Td{\zeta}\Bigg) \Hat{D}^{-1}({\mu}),\label{4.4}
\end{equation}
where $\Hat{D}(\mu)$ is a square matrix of order $2^{{k}-1}M$ and is given by
\begin{equation}
\Hat{D}({\mu})=\int_0^1 \Hat{\Psi}^{{\mu}}({\zeta})(\Hat{\Psi}^{{\mu}}({\zeta}))^{{T}}d{\zeta}. \label{4.5}
\end{equation}
For FBWs, when $k = 2$ and $M = 3$,
$$
\Hat{D}(0.9)=
\begin{bmatrix}
0.925875&	0.0844033&	-0.0311326& 0& 0& 0\\
0.0844033&	0.898029	&0.0903579& 0& 0& 0\\
-0.0311326	&0.0903579	&0.896687& 0& 0& 0\\
0& 0& 0&	1.07413&	0.0234615&	-0.00184153\\
0& 0& 0&	0.0234615	&1.07248	&0.0211615\\
0& 0& 0&	-0.00184153	&0.0211615	&1.07293\\

\\
\end{bmatrix}.
\quad
$$
When ${\mu}=1 $,  FBWs changes into OBWs as
$$
\Hat{D}(1)=
\begin{bmatrix}
1&0	&0 &	0 &	0	&0\\
0&1	&0	&0	&0&	0\\
0&0	&1	&0	&0	&0\\
0&0	&0	&1	&0	&0\\
0&0	&0	&0	&1	&0\\
0&0	&0	&0	&0	&1\\
\end{bmatrix}.
\quad
$$

\subsection{Product operational matrix}
The following property of the product of two FBWs function vectors will also be used:
\begin{align}
  \Hat{\Psi}^{{\mu}}({\zeta}) (\Hat{\Psi}^{{\mu}}({\zeta}))^{{T}}\Hat{C}^{{T}} \approx \Tilde{C}\Hat{\Psi}^{{\mu}}({\zeta}),  
\end{align}
where $ \Tilde{C}$ is a $ 2^{k-1}M  \times 2^{k-1}M $ product operational matrix given as
\begin{align*}
\Tilde{C}=\left(\int_{0}^{1}\Hat{\Psi}^{{\mu}}({\zeta}) (\Hat{\Psi}^{{\mu}}({\zeta}))^{{T}}\Hat{C}^{{T}}(\Hat{\Psi}^{{\mu}}({\zeta}))^{{T}}d\zeta\right)\Hat{D}^{-1}({\mu}).
\end{align*}
\section{Fractional Bernoulli wavelets operational matrix}
In the present analysis, the operational matrix for fractional-order integral has been derived. 
\begin{theorem}
Let $ \Hat{\Psi}^{{\mu}}({{\zeta}})$ be the FBWs vector introduced in Eq. \eqref{4.3}. Then the fractional-order integral operational matrix is given by
$$\mathcal{I}^{\mu}(\Hat{\Psi}^{{\mu}}({\zeta}))\approx \mathcal{P}^{\mu}\Hat{\Psi}^{{\mu}}({{\zeta}}).$$
Also, the $(n,m)$-th element of fractional-order integral of vector $\Hat{\Psi}^{{\mu}}({{\zeta}})$ is given by
$$\mathcal{I}^{\mu} (\psi_{{n},{m}}^{{\mu}}({\zeta}))  \approx \sum_{r=1}^{2^{{{k}}-1}}\sum_{l=0}^{{M}-1}\Theta_{r l}^{{\mu}; {n} {{m}}}\psi_{{r}, {l}}^{{\mu}}({{\zeta}}), ~ ~ ~ ~ ~ ~ ~ ~ ~n=1,2,\dots,2^{{k}-1},\quad{{m}} = 0, 1, 2,  \dots, {M}-1,$$
where
\begin{equation*}
\Theta_{r l}^{{\mu}; {n} {{m}}}=
\begin{cases}
\displaystyle \xi_{m}\sum_{j=0}^{m} \binom{m}{j}B_{m-j}(0) 2^{({k}-1)j} c_{rl}^{1{{j}}}, \quad  \quad \quad \quad \quad \quad \quad \quad \quad \quad \quad \quad  \quad n=1,\\
\displaystyle \xi_{m}\sum_{j=0}^{m} \sum_{q=0}^{j} \binom{m}{j}\binom{j}{q}B_{m-j}(0) 2^{({k}-1)q}(-1)^{j-q}(n-1)^{j-q} c_{rl}^{n{{q}}}, \quad \quad n\neq 1,
\end{cases}
\end{equation*}
and $\xi_{m}=\sqrt{\dfrac{2^{k-1}(2m)!}{(-1)^{m-1}(m!)^2B_{2m}(0)}} $.
\end{theorem}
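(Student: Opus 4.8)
The plan is to prove the expansion for a single wavelet $\psi_{n,m}^{\mu}(\zeta)$ and then stack the resulting coefficient vectors row by row to obtain $\mathcal{P}^{\mu}$. First I would substitute the closed-form polynomial representation \eqref{3.5} of $\psi_{n,m}^{\mu}$ and use linearity of the Riemann--Liouville fractional integral operator $\mathcal{I}^{\mu}$ to reduce the evaluation of $\mathcal{I}^{\mu}(\psi_{n,m}^{\mu})$ to applying $\mathcal{I}^{\mu}$ to the elementary pieces $\bigl(\zeta^{\mu}-\tfrac{n-1}{2^{k-1}}\bigr)^{j}\chi(\zeta)$ supported on the $n$-th subinterval, for $j=0,1,\dots,m$, carrying along the prefactors $\xi_{m}\binom{m}{j}B_{m-j}(0)2^{(k-1)j}$.

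For the case $n=1$ the subinterval is $\bigl[0,2^{-(k-1)/\mu}\bigr)$ and the inner factor is simply the monomial $\zeta^{\mu j}$ there, so after applying $\mathcal{I}^{\mu}$ I would expand the resulting function in the FBW basis and call $c_{rl}^{1j}$ its $(r,l)$-th coefficient, computed through the projection formulas \eqref{4.4}--\eqref{4.5}; summing over $j$ with the prefactors above reproduces exactly the first branch of $\Theta_{rl}^{\mu;nm}$. For $n\neq 1$ I would first apply the binomial theorem, $\bigl(\zeta^{\mu}-\tfrac{n-1}{2^{k-1}}\bigr)^{j}=\sum_{q=0}^{j}\binom{j}{q}(-1)^{j-q}\bigl(\tfrac{n-1}{2^{k-1}}\bigr)^{j-q}\zeta^{\mu q}$, and observe that multiplying by $2^{(k-1)j}$ turns $2^{(k-1)j}\,2^{-(k-1)(j-q)}$ into $2^{(k-1)q}$, so all the powers of two collapse into the single factor $2^{(k-1)q}(-1)^{j-q}(n-1)^{j-q}$. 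Applying $\mathcal{I}^{\mu}$ to each restricted monomial $\zeta^{\mu q}\chi(\zeta)$ and denoting its $(r,l)$-th FBW coefficient by $c_{rl}^{nq}$, the double sum over $j$ and $q$ with the coefficients $\xi_{m}\binom{m}{j}\binom{j}{q}B_{m-j}(0)$ gives precisely the second branch of $\Theta_{rl}^{\mu;nm}$.

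Finally, writing out these expansions for all $n=1,\dots,2^{k-1}$ and $m=0,\dots,M-1$ and arranging the scalars $\Theta_{rl}^{\mu;nm}$ into a $2^{k-1}M\times 2^{k-1}M$ array --- the row indexed by the pair $(n,m)$ and the column by $(r,l)$ in the ordering of \eqref{4.3} --- defines $\mathcal{P}^{\mu}$ and yields $\mathcal{I}^{\mu}(\Hat{\Psi}^{\mu}(\zeta))\approx\mathcal{P}^{\mu}\Hat{\Psi}^{\mu}(\zeta)$.

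The main obstacle is the step that expands $\mathcal{I}^{\mu}(\zeta^{\mu q}\chi(\zeta))$ in the FBW basis. Unlike the classical case $\mu=1$, the Riemann--Liouville integral of a power of $\zeta^{\mu}$ restricted to one subinterval is not again a polynomial in $\zeta^{\mu}$: on that subinterval it is an incomplete-Beta-type expression, and it has nonzero values on every subinterval to its right. Hence the identity holds only approximately, and the coefficients $c_{rl}^{nq}$ genuinely have to be defined by the least-squares projection \eqref{4.4}, whose explicit evaluation requires splitting $\int_{0}^{\zeta}$ at the subinterval endpoints and integrating the result against each $\psi_{r,l}^{\mu}$. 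That projection is the computational core of the argument; everything else is linearity of $\mathcal{I}^{\mu}$ together with binomial and power-of-two bookkeeping.
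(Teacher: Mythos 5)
Your proposal follows the paper's own argument essentially step for step: restrict to one subinterval, use linearity of $\mathcal{I}^{\mu}$, expand $\bigl(\zeta^{\mu}-\tfrac{n-1}{2^{k-1}}\bigr)^{j}$ by the binomial theorem so the powers of two collapse to $2^{(k-1)q}$, approximate $\mathcal{I}^{\mu}\bigl(\zeta^{\mu q}\chi(\zeta)\bigr)$ by its least-squares FBW projection with coefficients $c_{rl}^{nq}$ as in Eq.~\eqref{5.8}, and assemble the $\Theta_{rl}^{\mu;nm}$ into $\mathcal{P}^{\mu}$. This matches the paper's proof, and your remark that the projection step is where the approximation genuinely enters is exactly the content of Eq.~\eqref{5.8}.
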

\begin{proof}
The fractional  integration of the FBWs vector $\Hat{\Psi}^{{\mu}}({{\zeta}})$ can be determined as
\begin{equation}
\mathcal{I}^{\mu}(\Hat{\Psi}^{{\mu}}({{\zeta}}))\approx \mathcal{P}^{\mu}\Hat{\Psi}^{{\mu}}({{\zeta}}),\label{5.1}
\end{equation}
where $\mathcal{P}^{\mu}$ is operational matrix of order $2^{{{k}}-1}{M}$. 
Using Eq. \eqref{3.5}, FBWs can be written as follows:
\begin{align}
\psi_{{n},{m}}^{{\mu}}({\zeta}) = {\xi}_{m} \sum_{j=0}^{m}\binom{m}{j}B_{m-j}(0) 2^{(k-1)j}\left({\zeta}^{{\mu}}-\dfrac{n-1}{2^{k-1}}\right)^{j} \chi({\zeta})\Big\vert_{{\zeta}\in  \left[\left(\frac{{n}-1}{2^{{k}-1}}\right)^\frac{1}{{\mu}}, \left(\frac{{n}}{2^{{k}-1}}\right)^\frac{1}{{\mu}}\right)},\label{5.2}
\end{align}
where  $${\xi}_{m}=\sqrt{\dfrac{2^{k-1}(2m)!}{(-1)^{m-1}(m!)^2B_{2m}(0)}},$$ and~ $\chi({\zeta})\Big\vert_{{\zeta}\in \left[\left(\frac{{n}-1}{2^{{k}-1}}\right)^\frac{1}{{\mu}}, \left(\frac{{n}}{2^{{k}-1}}\right)^\frac{1}{{\mu}}\right)}$  is the characteristic  function defined as follows:
\begin{equation}
\chi({\zeta})\Big\vert_{{\zeta}\in \left[\left(\frac{{n}-1}{2^{{k}-1}}\right)^\frac{1}{{\mu}}, \left(\frac{{n}}{2^{{k}-1}}\right)^\frac{1}{{\mu}}\right)}=
\begin{cases}
1, ~ ~ ~ ~ ~ ~{\zeta}\in \left[\left(\frac{{n}-1}{2^{{k}-1}}\right)^\frac{1}{{\mu}}, \left(\frac{{n}}{2^{{k}-1}}\right)^\frac{1}{{\mu}}\right),\\
0, ~ ~ ~ ~ ~ ~ ~ ~ ~ ~ otherwise.\label{5.3}
\end{cases}  
\end{equation}
When ${n}=1$,  Eq. \eqref{5.2} can be written  as follows:
\begin{align}
\psi_{{1},{m}}^{{\mu}}({\zeta}) = {\xi}_{m} \sum_{j=0}^{m}\binom{m}{j}B_{m-j}(0) 2^{(k-1)j}\left({\zeta}^{{\mu} j}\right) \chi({\zeta})\Big\vert_{{\zeta}\in  \left[\left(\frac{{n}-1}{2^{{k}-1}}\right)^\frac{1}{{\mu}}, \left(\frac{{n}}{2^{{k}-1}}\right)^\frac{1}{{\mu}}\right)},\label{5.4}
\end{align}
\begin{align}
\mathcal{I}^{\mu}\left(\psi_{1, {m}}^{{\mu}} ({\zeta})\right) ={\xi}_{m} \sum_{j=0}^{m}\binom{m}{j}B_{m-j}(0) 2^{(k-1)j} \mathcal{I}^{\mu}\left({\zeta}^{{\mu} j} \chi({\zeta})\Big\vert_{{\zeta}\in \left[\left(\frac{{n}-1}{2^{{k}-1}}\right)^{\frac{1}{\mu}}, \left( \frac{{n}}{2^{{k}-1}}\right)^{\frac{1}{\mu}}\right)}\right). \label{5.5}
\end{align}
When ${n} = 2, 3, 4,  \dots,2^{{k}-1}$, 
Eq. \eqref{5.2} can be written  as follows:
\begin{align}
\psi_{{n},{m}}^{{\mu}}({\zeta}) = \displaystyle \xi_{m}\sum_{j=0}^{m} \sum_{q=0}^{j} \binom{m}{j}\binom{j}{q}B_{m-j}(0) 2^{({k}-1)q}(-1)^{j-q}(n-1)^{j-q}\left({\zeta}^{{\mu} q}\right) \chi({\zeta})\Big\vert_{{\zeta}\in  \left[\left(\frac{{n}-1}{2^{{k}-1}}\right)^\frac{1}{{\mu}}, \left(\frac{{n}}{2^{{k}-1}}\right)^\frac{1}{{\mu}}\right)},\label{5.6}
\end{align}
\begin{align}
\mathcal{I}^{\mu}(\psi_{{n}, {m}}^{{\mu}} ({\zeta})) =\displaystyle \xi_{m}\sum_{j=0}^{m} \sum_{q=0}^{j} \binom{m}{j}\binom{j}{q}B_{m-j}(0) 2^{({k}-1)q}(-1)^{j-q}(n-1)^{j-q}\mathcal{I}^{\mu}\left({\zeta}^{{\mu} q} \chi({\zeta})\Big\vert_{{\zeta}\in  \left[\left(\frac{{n}-1}{2^{{k}-1}}\right)^\frac{1}{{\mu}}, \left(\frac{{n}}{2^{{k}-1}}\right)^\frac{1}{{\mu}}\right)}\right).\label{5.7}
\end{align}
Now, approximating $\mathcal{I}^{\mu}\left({\zeta}^{{\mu} q}\chi({\zeta})\Big\vert_{{\zeta}\in \left[\left(\frac{{n}-1}{2^{{k}-1}}\right)^\frac{1}{{\mu}}, \left(\frac{{n}}{2^{{k}-1}}\right)^\frac{1}{{\mu}}\right)}\right)$ by $2^{{k}-1}M$ terms of FBWs, yields
\begin{equation}
\mathcal{I}^{\mu}  \left({\zeta}^{{\mu} q}\chi({\zeta})\Big\vert_{{\zeta}\in \left[\left(\frac{{n}-1}{2^{{k}-1}}\right)^\frac{1}{{\mu}}, \left(\frac{{n}}{2^{{k}-1}}\right)^\frac{1}{{\mu}}\right)}\right)=g_{{n}q}({\zeta})\approx \sum_{r=1}^{2^{{k}-1}}\sum_{l=0}^{M-1}c_{rl}^{nq}\psi_{r,l}^{{\mu}}({\zeta})=C_{rl}^{T}\Hat{\Psi}^{{\mu}}({\zeta}),  \label{5.8}
\end{equation}
where $C_{rl}^{T} =<g_{{n}q}({\zeta}),  (\Hat{\Psi}^{{\mu}}({\zeta}))^T>\Hat{D}^{-1}({\mu}).$\\\\
Substituting Eq. \eqref{5.8} into Eqs. \eqref{5.5} and \eqref{5.7}, yields 
\begin{align}\nonumber
\mathcal{I}^{\mu}(\psi_{1,{m}}^{{\mu}} ({\zeta})) &\approx {\xi}_{m} \sum_{j=0}^{m}\binom{m}{j}B_{m-j}(0) 2^{(k-1)j}\sum_{r=1}^{2^{{k}-1}}\sum_{l=0}^{M-1}c_{rl}^{1j}\psi_{r,l}^{{\mu}}({\zeta})\\
&=\sum_{r=1}^{2^{{k}-1}}\sum_{l=0}^{M-1}\Theta_{r l}^{{\mu}; 1 {m}}\psi_{{r}, {l}}^{{\mu}}({\zeta}), \quad \quad \quad \quad 
 {m} = 0, 1, 2,  \dots, M-1,\label{5.9}
\end{align}
where
$\Theta_{r l}^{{\mu}; 1 {m}}= \displaystyle{\xi}_{m} \sum_{j=0}^{m}\binom{m}{j}B_{m-j}(0) 2^{(k-1)j} c_{rl}^{1{j}}$.\\
\begin{align}\nonumber
\mathcal{I}^{\mu}(\psi_{n,{m}}^{{\mu}} ({\zeta})) &\approx \displaystyle \xi_{m}\sum_{j=0}^{m} \sum_{q=0}^{j} \binom{m}{j}\binom{j}{q}B_{m-j}(0) 2^{({k}-1)q}(-1)^{j-q}(n-1)^{j-q}\sum_{r=1}^{2^{{k}-1}}\sum_{l=0}^{M-1}c_{rl}^{nq}\psi_{r,l}^{{\mu}}({\zeta})\\
&=\sum_{r=1}^{2^{{k}-1}}\sum_{l=0}^{M-1}\Theta_{r l}^{{\mu}; n {m}}\psi_{{r}, {l}}^{{\mu}}({\zeta}), \quad \quad \quad \quad 
 n=2,3,\dots,2^{k-1},{m} = 0, 1, 2,  \dots, M-1,\label{5.10}
\end{align}
where
$\Theta_{r l}^{{\mu}; n {m}}= \displaystyle \xi_{m}\sum_{j=0}^{m} \sum_{q=0}^{j} \binom{m}{j}\binom{j}{q}B_{m-j}(0) 2^{({k}-1)q}(-1)^{j-q}(n-1)^{j-q}c_{rl}^{n{q}}$.\\
Therefore,
\begin{align*}
\mathcal{I}^{\mu} (\psi_{{n},{m}}^{{\mu}}({\zeta})) &\approx \sum_{r=1}^{2^{{{k}}-1}}\sum_{l=0}^{{M}-1}\Theta_{r l}^{{\mu}; {n} {{m}}}\psi_{{r}, {l}}^{{\mu}}({\zeta}) ~ ~ ~ ~ ~ ~ ~ ~ ~n=1,2,\dots,2^{{k}-1},\quad{{m}} = 0, 1, 2,  \dots, {M}-1,\\
&= \mathcal{P}^{\mu}\Hat{\Psi}^{{\mu}}({\zeta}),
\end{align*}
where
$$ \mathcal{P}^{\mu} = 
\begin{bmatrix}  
\Theta_{1 0}^{{\mu}; 1 0} &  \dotsb  &\Theta_{1 M-1}^{{\mu}; 1 0} & \dotsb& \Theta_{2^{{k}-1} 0}^{{\mu}; 1 0}& \dotsb     &     \Theta_{2^{{k}-1} M-1}^{{\mu}; 1 0}\\
 \Theta_{1 0}^{{\mu}; 1 1} & \dotsb &\Theta_{1 M-1}^{{\mu}; 1 1} & \dotsb & \Theta_{2^{{k}-1}0}^{{\mu}; 1 1}& \dotsb    &     \Theta_{2^{{k}-1} M-1}^{{\mu}; 1 1}\\
  &  &         &   &  &  \\ \\
\vdots  & \dotsb  &   \vdots       & \dotsb   & \vdots  & \ddots & \vdots\\
  &  &         &   &  &  \\
\Theta_{1 0}^{{\mu}; 2^{{k}-1} M-1} & \dotsb &\Theta_{1 M-1}^{{\mu}; 2^{{k}-1} M-1} &\dotsb & \Theta_{2^{{k}-1} 0}^{{\mu}; 2^{{k}-1} M-1}& \dotsb     &     \Theta_{2^{{k}-1} M-1}^{{\mu}; 2^{{k}-1} M-1}\\ 
\quad
\end{bmatrix}.
\quad
$$  
\end{proof}
Corollary: When $ k=2,M=2$ and ${\mu}=0.9$, operational matrix for FBWs
$$
\mathcal{P}^{0.9}=
\begin{bmatrix}

0.197148	&0.122578 &	0.416644	&6.13304\times10^{-12}\\
-0.0998925 &	0.0111862 &0.038012&	5.59535\times10^{-13}\\
0&  0&	0.238628	&0.139667\\
0&	0& -0.134174&	0.00305066\\

\end{bmatrix}
\quad
.$$
\section{Numerical scheme }
Consider the multi-dimensional FOCP defined as follows: 
\begin{align}
 \min\Tilde{J}  =  \dfrac{1}{2}\int_{0}^{1}\left(a(\zeta) x_{1}^2({\zeta})+b(\zeta) x_{2}^2({\zeta})+c(\zeta)u^2({\zeta})\right)d{\zeta},\label{6.1}
\end{align}
\begin{align}  
\mathcal{G}_{1}\left({^{C}}\mathcal{D}_{{\zeta}}^{{\mu}}x_{1}({\zeta}),{^{C}}\mathcal{D}_{{\zeta}}^{{\mu}}x_{2}({\zeta}) \right)= \mathcal{H}_{1}({\zeta}, x_{1}({\zeta}), x_{2}({\zeta}), u({\zeta})),\label{6.2}
\end{align}
\begin{align}  
\mathcal{G}_{2}\left({^{C}}\mathcal{D}_{{\zeta}}^{{\mu}}x_{1}({\zeta}),{^{C}}\mathcal{D}_{{\zeta}}^{{\mu}}x_{2}({\zeta}) \right)= \mathcal{H}_{2}({\zeta}, x_{1}({\zeta}), x_{2}({\zeta}), u({\zeta})),\label{6.3}
\end{align}
\begin{align}
    x_{1}(0)= x_{1},x_{2}(0)= x_{2}.\label{6.4}
\end{align}
\\
First, expand the fractional derivative of the state functions  by the FBWs basis $\Hat{\Psi}^{{\mu}}({\zeta})$.
\begin{align}
 {^{C}}\mathcal{D}_{{\zeta}}^{{\mu}}x_{1}({\zeta})\approx \Hat{C}_{1}^{T}\Hat{\Psi}^{{\mu}}({\zeta}),\label{6.5}
\end{align}
\begin{align}
 {^{C}}\mathcal{D}_{{\zeta}}^{{\mu}}x_{2}({\zeta})\approx \Hat{C}_{2}^{T}\Hat{\Psi}^{{\mu}}({\zeta}),\label{6.6}
\end{align}
\begin{align}
      u({\zeta}) \approx \Hat{U}^{T}\Hat{\Psi}^{{\mu}}({\zeta}),\label{6.7}
\end{align}
where $\Hat{C}_{1}^{T}=[\Hat{c}_{11}, \Hat{c}_{12},\dots, \Hat{c}_{1\Hat{m}}]$, $\Hat{C}_{2}^{T}=[\Hat{c}_{21}, \Hat{c}_{22}, \dots, \Hat{c}_{2\Hat{m}}],$ and $
\Hat{U}^{T}=[\Hat{u}_{1}, \Hat{u}_{2}, \dots, \Hat{u}_{\Hat{m}}]$
are unknowns. \\
Also approximating $ a({\zeta}), b({\zeta}), c({\zeta}), x_{1}(0)$, and $ x_{2}(0)$ by the FBWs basis as 
\begin{align*}
a({\zeta})\approx A^T\Hat{\Psi}^{{\mu}}({\zeta}), b({\zeta})\approx B^T\Hat{\Psi}^{{\mu}}({\zeta}),\\
c({\zeta})\approx C^T\Hat{\Psi}^{{\mu}}(\zeta), x_{1}(0) \approx d_{1}^T\Hat{\Psi}^{{\mu}}({\zeta}),\\
x_{2}(0)\approx d_{2}^T\Hat{\Psi}^{{\mu}}({\zeta}),
\end{align*}
where
\begin{align*}
A^T&=[a_{1}, a_{2}, \dots, a_{\Hat{m}}],\quad B^T=[b_{1}, b_{2}, \dots, b_{\Hat{m}}],\\
C^T&=[c_{1}, c_{2}, \dots, c_{\Hat{m}}], \quad d_{1}^T=[d_{11}, d_{12}, \dots, d_{1\Hat{m}}],\\
d_{2}^T&=[d_{21}, d_{22}, \dots, d_{2\Hat{m}}],
\end{align*}
\begin{align*}
{a}_{l}=\Big(\int_0^1 a({\zeta})\left(\Hat{\Psi}^{{\mu}}({\zeta})\right)^{T}d{\zeta}\Big)\Hat{D}^{-1}(\mu),~~~~
 {b}_{l}=\Big(\int_0^1 b({\zeta})\left(\Hat{\Psi}^{{\mu}}({\zeta})\right)^{T}d{\zeta}\Big)\Hat{D}^{-1}({\mu}),\\
{c} _{l}=\Big(\int_0^1 c({\zeta})\left(\Hat{\Psi}^{{\mu}}({\zeta})\right)^{T}d{\zeta}\Big)\Hat{D}^{-1}({\mu}),~~~~ {d}_{1l}=\Big(\int_0^1 x_{1}(0)\left(\Hat{\Psi}^{{\mu}}({\zeta})\right)^{T}d{\zeta}\Big)\Hat{D}^{-1}({\mu}),\\
{d}_{2l}=\Big(\int_0^1 x_{2}(0)\left(\Hat{\Psi}^{{\mu}}({\zeta})\right)^{T}d{\zeta}\Big)\Hat{D}^{-1}({\mu}), \quad l=1,2, \dots, \Hat{m}.
\end{align*}  
Using  fractional operational matrix of integration, $x_{1}({\zeta})$ and $x_{2}({\zeta})$ can be represented as
\begin{align}\nonumber
\mathcal{I}^{{\mu}}\left({^{C}}{}\mathcal{D}_{{\zeta}}^{{\mu}}x_{1}({\zeta})\right)&=x_{1}({\zeta})-x_{1}(0),\\ \nonumber
x_{1}({\zeta})&=\mathcal{I}^{{\mu}}\left({^{C}}{}\mathcal{D}_{{\zeta}}^{{\mu}}x_{1}({\zeta})\right)+x_{1}(0)\\
&\approx \left(\Hat{C}_{1}^{T} \mathcal{P}^{{\mu}} + d_{1}^{T}\right) \Hat{\Psi}^{{\mu}}({\zeta}).\label{6.8}
\end{align}
\begin{align}\nonumber
\mathcal{I}^{{\mu}}\left({^{C}}{}\mathcal{D}_{{\zeta}}^{{\mu}}x_{2}({\zeta})\right)&=x_{2}({\zeta})-x_{2}(0),\\ \nonumber
x_{2}({\zeta})&=\mathcal{I}^{{\mu}}\left({^{C}}{}\mathcal{D}_{{\zeta}}^{{\mu}}x_{2}({\zeta})\right)+x_{2}(0)\\
&\approx \left(\Hat{C}_{2}^{T} \mathcal{P}^{{\mu}} + d_{2}^{T}\right) \Hat{\Psi}^{{\mu}}({\zeta}).\label{6.9}
\end{align}
where $\mathcal{P}^{\mu}$ is the fractional operational matrix of integration of order ${\mu}$. \\   
Now, approximating  $x_{1}^2(\zeta)$ by using FBWs, yields
\begin{align}\nonumber
x_{1}^2({\zeta}) &\approx\left(\left(\Hat{C}_{1}^{T}\mathcal{P}^{\mu}+d_{1}^{T}\right)\Hat{\Psi}^{\mu}({\zeta})\right)^2\\ \nonumber
&=\left(\Hat{C}_{1}^{T}\mathcal{P}^{\mu}+d_{1}^{T}\right)\Hat{\Psi}^{\mu}({\zeta})\left(\Hat{\Psi}^{\mu}({\zeta})\right)^{T}\left(\Hat{C}_{1}^{T}\mathcal{P}^{\mu}+d_{1}^{T}\right)^{T}\\
&=\Hat{C}_{12}^{T}\Hat{\Psi}^{\mu}({\zeta})\left(\Hat{\Psi}^{\mu}({\zeta})\right)^{T}\Hat{C}_{12}, \label{6.10} 
\end{align}
where $ \Hat{C}_{12}=(\Hat{C}_{1}^{T}\mathcal{P}^{\mu}+d_{1}^{T})^{T}$.\\
Now, approximating
\begin{align*}
\Hat{\Psi}^{\mu}({\zeta})(\Hat{\Psi}^{\mu}({\zeta}))^{T}\Hat{C}_{12}\approx\Hat{C}_{13}\Hat{\Psi}^{\mu}({\zeta}),
\end{align*}
 where $\Hat{C}_{13}$ is the product operational matrix of order $\Hat{m}$ and given as 
\begin{align*}
\Hat{C}_{13}=\Bigg( \int_{0}^{1} \left(\Hat{\Psi}^{\mu}({\zeta})\left(\Hat{\Psi}^{\mu}({\zeta})\right)^T\Hat{C}_{12}\left(\Hat{\Psi}^{\mu}({\zeta})\right)^{T}\right)d{\zeta}\Bigg)\Hat{D}^{-1}(\mu).
\end{align*}
Thus, from Eq. \eqref{6.10}, we obtain
\begin{align*}
x_{1}^2({\zeta}) &\approx\Hat{C}_{12}^{T}\Hat{C}_{13}\Hat{\Psi}^{\mu}({\zeta})\\
&=\Hat{C}_{14}^T\Hat{\Psi}^{\mu}({\zeta}),
\end{align*}
where $\Hat{C}_{14}^T=\Hat{C}_{12}^{T}\Hat{C}_{13}.$\\
Now,
\begin{align*}
a({\zeta})x_{1}^2({\zeta}) & \approx \left({A}^{T}\Hat{\Psi}^{\mu}({\zeta})\right)\left(\Hat{C}_{14}^{T}\Hat{\Psi}^{\mu}({\zeta})\right)^T\\
&={A}^{T}\Hat{\Psi}^{\mu}({\zeta})\left(\Hat{\Psi}^{\mu}({\zeta})\right)^T\Hat{C}_{14},
\end{align*}
Again,  approximating
\begin{align*}
\Hat{\Psi}^{\mu}({\zeta})(\Hat{\Psi}^{\mu}({\zeta}))^{T}\Hat{C}_{14} \approx \Hat{C}_{15}\Hat{\Psi}^{\mu}({\zeta}),
\end{align*}
where $\Hat{C}_{15}$ is product operational matrix of order $\Hat{m}$, given as
\begin{align*}
\Hat{C}_{15}=\Bigg(\int_{0}^{1} \left(\Hat{\Psi}^{\mu}({\zeta})\left(\Hat{\Psi}^{\mu}({\zeta})\right)^T\Hat{C}_{14}\left(\Hat{\Psi}^{\mu}({\zeta})\right)^{T}\right)d{\zeta}\Bigg)\Hat{D}^{-1}(\mu).
\end{align*}
Thus,
\begin{align*}
~~~~~~~~ a({\zeta})x_{1}^2({\zeta}) &  \approx A^{T}\Hat{C}_{15}\Hat{\Psi}^{\mu}({\zeta})\\
&=\Hat{C}_{16}^{T}\Hat{\Psi}^{\mu}({\zeta}),
\end{align*}
where $\Hat{C}_{16}^{T}= A^{T}\Hat{C}_{15}.$\\
Similarly, approximating $x_{2}^2({\zeta})$, yields
\begin{align}\nonumber
x_{2}^2({\zeta}) &\approx\left(\left(\Hat{C}_{2}^{T}\mathcal{P}^{\mu}+d_{2}^{T}\right)\Hat{\Psi}^{\mu}({\zeta})\right)^2\\ \nonumber
&=\left(\Hat{C}_{2}^{T}\mathcal{P}^{\mu}+d_{2}^{T}\right)\Hat{\Psi}^{\mu}({\zeta})\left(\Hat{\Psi}^{\mu}({\zeta})\right)^{T}\left(\Hat{C}_{2}^{T}\mathcal{P}^{\mu}+d_{2}^{T}\right)^{T}\\
&=\Hat{C}_{22}^{T}\Hat{\Psi}^{\mu}({\zeta})\left(\Hat{\Psi}^{\mu}({\zeta})\right)^{T}\Hat{C}_{22}, \label{6.11} 
\end{align}
where $ \Hat{C}_{22}=\left(\Hat{C}_{2}^{T}\mathcal{P}^{\mu}+d_{2}^{T}\right)^{T}$.\\
Again, approximating
\begin{align*}
\Hat{\Psi}^{\mu}({\zeta})\left(\Hat{\Psi}^{\mu}({\zeta})\right)^{T}\Hat{C}_{22}\approx\Hat{C}_{23}\Hat{\Psi}^{\mu}({\zeta}),
\end{align*}
where $\Hat{C}_{23}$ is the product operational matrix of order $\Hat{m}$ and given as
\begin{align*}
\Hat{C}_{23}=\Bigg(\int_{0}^{1} \left(\Hat{\Psi}^{\mu}({\zeta})\left(\Hat{\Psi}^{\mu}({\zeta})\right)^T\Hat{C}_{22}\left(\Hat{\Psi}^{\mu}({\zeta})\right)^{T}\right)d{\zeta}\Bigg)\Hat{D}^{-1}(\mu).
\end{align*}
Therefore, from Eq. \eqref{6.11} gives
\begin{align*}
x_{2}^2({\zeta}) &\approx\Hat{C}_{22}^{T}\Hat{C}_{23}\Hat{\Psi}^{\mu}({\zeta})\\
&=\Hat{C}_{24}^T\Hat{\Psi}^{\mu}({\zeta}),
\end{align*}
where $\Hat{C}_{24}^T =\Hat{C}_{22}^{T}\Hat{C}_{23}.$\\
Now,
\begin{align*}
b({\zeta})x_{2}^2({\zeta}) & \approx \left({B}^{T}\Hat{\Psi}^{\mu}({\zeta})\right)\left(\Hat{C}_{24}^{T}\Hat{\Psi}^{\mu}({\zeta})\right)^T\\
&={B}^{T}\Hat{\Psi}^{\mu}({\zeta})\left(\Hat{\Psi}^{\mu}({\zeta})\right)^T\Hat{C}_{24},
\end{align*}
The following approximation gives as
\begin{align*}
\Hat{\Psi}^{\mu}({\zeta})(\Hat{\Psi}^{\mu}({\zeta}))^{T}\Hat{C}_{24} \approx \Hat{C}_{25}\Hat{\Psi}^{\mu}({\zeta}),
\end{align*}
where $\Hat{C}_{25}$ is product operational matrix of order $\Hat{m}$ and given as
\begin{align*}
\Hat{C}_{25}=\Bigg(\int_{0}^{1} \left(\Hat{\Psi}^{\mu}({\zeta})\left(\Hat{\Psi}^{\mu}({\zeta})\right)^T\Hat{C}_{24}\left(\Hat{\Psi}^{\mu}({\zeta})\right)^{T}\right)d{\zeta}\Bigg)\Hat{D}^{-1}(\mu).
\end{align*}
Thus,
\begin{align*}
~~~~~~~~ b({\zeta})x_{21}^2({\zeta}) &  \approx B^{T}\Hat{C}_{25}\Hat{\Psi}^{\mu}({\zeta})\\
&=\Hat{C}_{26}^{T}\Hat{\Psi}^{\mu}({\zeta}),
\end{align*}
where $\Hat{C}_{26}^{T}= B^{T}\Hat{C}_{25}.$\\
Now, approximating $u^2({\zeta})$, yields
\begin{align}\nonumber
u^2({\zeta}) &\approx \left(\Hat{U}^{T} \Hat{\Psi}^{\mu}({\zeta})\right)^2 \\\nonumber
&= \Hat{U}^{T}\Hat{\Psi}^{\mu}({\zeta}) \left(\Hat{\Psi}^{\mu}({\zeta})\right)^{T} \Hat{U}\\  \nonumber
& \approx\Hat{ U}^{T}\Hat{ U}_{1}\Hat{\Psi}^{\mu}({\zeta})\\
&=\Hat{ U}_{2}^T\Hat{\Psi}^{\mu}({\zeta}),
\label{6.12}  
\end{align}
where $\Hat{ U}_{1}$ is product operational matrix of order $\Hat{m}$, given as
\begin{equation*}
    \Hat{ U}_{1}=\Big(\int_{0}^{1} \left(\Hat{\Psi}^{\mu}({\zeta})\left(\Hat{\Psi}^{\mu}({\zeta})\right)^T\Hat{U}\left(\Hat{\Psi}^{\mu}({\zeta})\right)^{T}\right)d{\zeta}\Big)\Hat{D}^{-1}(\mu),
\end{equation*}
and $$ \Hat{ U}_{2}^T=\Hat{U}^{T}\Hat{ U}_{1}.$$
Now, 
\begin{align}\nonumber
c({\zeta})u^2({\zeta}) &\approx \left({C}^{T} \Hat{\Psi}^{\mu}({\zeta})\right)\left(\Hat{ U}_{2}^{T}\Hat{\Psi}^{\mu}({\zeta})\right)^T \\ \nonumber
&= C^{T}\Hat{\Psi}^{\mu}({\zeta}) \left(\Hat{\Psi}^{\mu}({\zeta})\right)^{T} \Hat{ U}_{2}\\ \nonumber
& \approx C^{T} \Hat{ U}_{3}\Hat{\Psi}^{\mu}({\zeta})\\
&=\Hat{ U}_{4}^{T}\Hat{\Psi}^{\mu}({\zeta}),
 \label{6.13}
\end{align}
where $\Hat{ U}_{3}$ is product operational matrix of order $\Hat{m}$, given as
\begin{align*}
   \Hat{ U}_{3}=\Big(\int_{0}^{1} \left(\Hat{\Psi}^{\mu}({\zeta})\left(\Hat{\Psi}^{\mu}({\zeta})\right)^{T}\Hat{ U}_{2}\left(\Hat{\Psi}^{\mu}({\zeta})\right)^{T}\right)d{\zeta}\Big)\Hat{D}^{-1}(\mu),
\end{align*}
and $$ \Hat{ U}_{4}^{T}= C^{T}\Hat{ U}_{3}.$$
\\
Cost function has been approximated by using approximated value of functions and product operational matrices as
\begin{align}\nonumber
\Tilde{J}\approx & \dfrac{1}{2} \int_{0}^{1}\left(\Hat{C}_{16}^T+\Hat{C}_{26}^T+\Hat{U}_{4}^T\right)\Hat{\Psi}^{{\mu}}({\zeta})d{\zeta}\\
=&\Tilde{J}\left[\Hat{C}_{1}^T,\Hat{C}_{2}^T, \Hat{U}^T\right].\label{6.14}
\end{align}
The dynamical system can be approximated as 
\begin{align*}
\mathcal{G}_{1}\left(\Hat{C}_{1}^{T}\Hat{\Psi}^{{\mu}}({\zeta}),\Hat{C}_{2}^{T}\Hat{\Psi}^{{\mu}}({\zeta})\right)= \mathcal{H}_{1}\left(\Hat{C}_{12}^T\Hat{\Psi}^{{\mu}}({\zeta}), \Hat{C}_{22}^T\Hat{\Psi}^{{\mu}}({\zeta}),  \Hat{U}^T\Hat{\Psi}^{{\mu}}({\zeta})\right),\\
\left(\mathcal{G}_{1}\left(\Hat{C}_{1}^{T},\Hat{C}_{2}^{T}\right)- \mathcal{H}_{1}\left(\Hat{C}_{12}^T, \Hat{C}_{22}^T,  \Hat{U}^T\right)\right)\Hat{\Psi}^{{\mu}}({\zeta})=0.
\end{align*} 
\begin{align*}
\mathcal{G}_{2}\left(\Hat{C}_{1}^{T}\Hat{\Psi}^{{\mu}}({\zeta}),\Hat{C}_{2}^{T}\Hat{\Psi}^{{\mu}}({\zeta})\right)= \mathcal{H}_{2}\left(\Hat{C}_{12}^T\Hat{\Psi}^{{\mu}}({\zeta}), \Hat{C}_{22}^T\Hat{\Psi}^{{\mu}}({\zeta}),  \Hat{U}^T\Hat{\Psi}^{{\mu}}({\zeta})\right),\\
\left(\mathcal{G}_{2}\left(\Hat{C}_{1}^{T},\Hat{C}_{2}^{T}\right)- \mathcal{H}_{2}\left(\Hat{C}_{12}^T, \Hat{C}_{22}^T,  \Hat{U}^T\right)\right)\Hat{\Psi}^{{\mu}}({\zeta})=0. 
\end{align*} 
Thus dynamical system changes into system of algebraic equations:
\begin{align}\label{6.15}
\mathcal{G}_{1}\left(\Hat{C}_{1}^{T},\Hat{C}_{2}^{T}\right)- \mathcal{H}_{1}\left(\Hat{C}_{12}^T, \Hat{C}_{22}^T,  \Hat{U}^T\right)=0,\\ 
\mathcal{G}_{2}\left(\Hat{C}_{1}^{T},\Hat{C}_{2}^{T}\right)- \mathcal{H}_{2}\left(\Hat{C}_{12}^T, \Hat{C}_{22}^T,  \Hat{U}^T\right)=0.\label{6.16} 
\end{align}
Let
\begin{align}\nonumber
 \Tilde{J}^\star\left[\Hat{C}_{1},\Hat{C}_{2}, \Hat{U}, \eta^{\star},\lambda^{\star}\right] \approx\Tilde{J}\left[\Hat{C}_{1},\Hat{C}_{2}, \Hat{U}\right]+ \left( \mathcal{G}_{1}\left(\Hat{C}_{1}^{T},\Hat{C}_{2}^{T}\right)- \mathcal{H}_{1}\left(\Hat{C}_{12}^T, \Hat{C}_{22}^T,  \Hat{U}^T\right)\right)\eta^{\star}\\
 +\left(\mathcal{G}_{2}\left(\Hat{C}_{1}^{T},\Hat{C}_{2}^{T}\right)- \mathcal{H}_{2}\left(\Hat{C}_{12}^T, \Hat{C}_{22}^T,  \Hat{U}^T\right)\right)\lambda^{\star}, \label{6.17}   
\end{align}
where 
$\eta^{\star}=[ \eta_{1}^{\star}, \eta_{2}^{\star}, \dots,\eta_{\Hat{{m}}}^{\star}]^{T}$ and
$\lambda^{\star}=[ \lambda_{1}^{\star}, \lambda_{2}^{\star}, \dots,\lambda_{\Hat{{m}}}^{\star}]^{T}$
are the unknown Lagrange multipliers.\\\\
Now the necessary conditions for the extremum are
\begin{align}
 \dfrac{\partial{\Tilde{ J}^\star}} {\partial{\Hat{C}_{1}}}=0,\quad \dfrac{\partial{\Tilde{ J}^\star}} {\partial{\Hat{C}_{2}}}=0,\quad   \dfrac{\partial{\Tilde{J}^\star}} {\partial{\Hat{U}}}=0,\quad
  \dfrac{\partial{\Tilde{J}^\star}} {\partial{\eta^{\star}}}=0,\quad \dfrac{\partial{\Tilde{J}^\star}} {\partial{\lambda^{\star}}}=0.\label{6.18}
\end{align}
  The above system of linear  Eqs. \eqref{6.18} have been solved for $\Hat{C}_{1}, \Hat{C}_{2}, \Hat{U}, \eta^{\star}$ and $\lambda^{\star} $ using the Lagrange multipliers  method. By determining  $\Hat{C}_{1}, \Hat{C}_{2}$ and $\Hat{U}$, we have determined the approximated values of   $u({\zeta}), x_{1}({\zeta}),  $ and $x_{2}({\zeta})$ from Eqs. \eqref{6.7}, \eqref{6.8} and \eqref{6.9}, respectively. 
\section{Error estimation }
In this section, the best approximation to a smooth function has been obtained by estimating the error norm using FBWs.

\begin{lemma}\label{best approx}
Let $f(\zeta)\in C^N[a, b]$ and ${P}_{N-1}({\zeta})$ is the best interpolation polynomial to $f(\zeta)$ at the roots of the $N$-degree shifted Chebyshev polynomial in $[a, b]$. Then 
$$|f(\zeta)-{P}_{N-1}(\zeta)|\leq \dfrac{({b}-{a})^N}{N!2^{2N-1}}\max_{\xi\in[{a}, {b}]} |f^N(\xi)|.$$ 
\end{lemma}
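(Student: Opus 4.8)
The plan is to combine the classical polynomial interpolation error formula with the minimax property of Chebyshev polynomials on the reference interval.

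First I would invoke the standard interpolation error representation. Write $\zeta_0,\dots,\zeta_{N-1}$ for the $N$ distinct roots of the $N$-degree shifted Chebyshev polynomial on $[a,b]$, and put $\omega(\zeta)=\prod_{i=0}^{N-1}(\zeta-\zeta_i)$ for the associated monic node polynomial. Since $P_{N-1}$ interpolates $f\in C^N[a,b]$ at these nodes, for each fixed $\zeta\in[a,b]$ there is a point $\xi=\xi(\zeta)\in[a,b]$ such that
$$f(\zeta)-P_{N-1}(\zeta)=\frac{f^{(N)}(\xi)}{N!}\,\omega(\zeta).$$
This follows by the usual device of applying Rolle's theorem $N$ times to $g(t)=f(t)-P_{N-1}(t)-\kappa\,\omega(t)$, where the constant $\kappa$ is chosen so that $g(\zeta)=0$ (if $\zeta$ is itself one of the nodes, both sides above vanish and there is nothing to prove).

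Next I would estimate $|\omega(\zeta)|$ uniformly on $[a,b]$. Using the affine map $\zeta=\tfrac{a+b}{2}+\tfrac{b-a}{2}\,t$ which sends $[-1,1]$ onto $[a,b]$, each $\zeta_i$ corresponds to a root $t_i\in(-1,1)$ of the classical Chebyshev polynomial $T_N$, and since $2^{1-N}T_N$ is precisely the monic polynomial with roots $t_0,\dots,t_{N-1}$, we get
$$\omega(\zeta)=\prod_{i=0}^{N-1}\frac{b-a}{2}\,(t-t_i)=\Big(\frac{b-a}{2}\Big)^{N}2^{1-N}\,T_N(t).$$
Because $|T_N(t)|\le 1$ for all $t\in[-1,1]$, this yields
$$|\omega(\zeta)|\le\Big(\frac{b-a}{2}\Big)^{N}2^{1-N}=\frac{(b-a)^N}{2^{2N-1}}\qquad\text{for all }\zeta\in[a,b].$$

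Finally I would combine the two displays to obtain
$$|f(\zeta)-P_{N-1}(\zeta)|=\frac{|f^{(N)}(\xi)|}{N!}\,|\omega(\zeta)|\le\frac{(b-a)^N}{N!\,2^{2N-1}}\,\max_{\xi\in[a,b]}|f^{(N)}(\xi)|,$$
which is exactly the asserted estimate. The only step carrying real content is the identification of the Chebyshev node polynomial with the scaled polynomial $2^{1-N}T_N$ together with the bound $|T_N|\le 1$ — that is, the fact that the Chebyshev nodes minimize the sup-norm of the monic node polynomial and produce the sharp constant $2^{1-N}$ on $[-1,1]$; the interpolation error formula and the affine rescaling are entirely routine. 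I would also note that the bound holds verbatim for the Lagrange interpolant at the Chebyshev nodes, so no optimality of $P_{N-1}$ beyond interpolation at those nodes is actually required.
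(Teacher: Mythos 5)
Your proof is correct and complete: the Cauchy interpolation error formula, the affine rescaling to $[-1,1]$, and the identification of the monic Chebyshev node polynomial with $2^{1-N}T_N$ (bounded by $2^{1-N}$ in sup-norm) combine to give exactly the constant $\dfrac{(b-a)^N}{N!\,2^{2N-1}}$ in the statement. The paper itself does not prove this lemma but only cites a reference, so your argument is in fact more self-contained than what appears there; it is the standard derivation one would find in that reference, and your closing remark that only interpolation at the Chebyshev nodes (not any further optimality of $P_{N-1}$) is needed is a correct and worthwhile clarification.
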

\begin{proof}
For the proof of this lemma, one may refer to \cite{ray2018numerical}.
\end{proof}
\begin{lemma}\label{wavelets best approx}
Suppose $\displaystyle \sum_{{n}=1}^{2^{{k}-1}}\sum_{{m}=0}^{M-1}c_{{n}{m}}\psi_{{n}{m}}^{\mu}({\zeta})=\displaystyle \sum_{i=1}^{\Hat{m}}\Hat{c}_{i}\Hat{\psi}_{i}^{\mu}({\zeta})=\Hat{C}^{T}\Hat{\Psi}^{\mu}({\zeta})$  be the FBWs expansion of the real sufficiently smooth function ${f}({\zeta})\in [0,1]$. Then there exists a real number $\Tilde{\mathcal{M}}$ such that 
$$\lVert{f}({\zeta})-\Hat{C}^{T} \Hat{\Psi}^{\mu}({\zeta})\rVert_{2}\leq \dfrac{\Tilde{\mathcal{M}}}{\Hat{m}!2^{2\Hat{m}-1}}.$$
\end{lemma}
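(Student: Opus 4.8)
The plan is to exploit the fact that $\Hat{C}^{T}\Hat{\Psi}^{\mu}(\zeta)$ is, by its very construction, the optimal $L^{2}[0,1]$ approximation of $f$ out of the finite-dimensional space $Y=\mathrm{span}\{\psi_{n,m}^{\mu}:1\le n\le 2^{k-1},\ 0\le m\le M-1\}$, and then to bound the error of one conveniently chosen competitor in $Y$ by means of the polynomial interpolation estimate in Lemma \ref{best approx}.

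First I would record the best-approximation property. The coefficient vector of Eq. \eqref{4.4}, namely $\Hat{C}^{T}=\big(\int_{0}^{1}f(\zeta)(\Hat{\Psi}^{\mu}(\zeta))^{T}d\zeta\big)\Hat{D}^{-1}(\mu)$ with the Gram matrix $\Hat{D}(\mu)$ of Eq. \eqref{4.5}, is exactly the solution of the associated normal equations; equivalently, $f-\Hat{C}^{T}\Hat{\Psi}^{\mu}$ is $L^{2}$-orthogonal to every $\psi_{n,m}^{\mu}$. Consequently, for every $g\in Y$,
\begin{equation*}
\lVert f-\Hat{C}^{T}\Hat{\Psi}^{\mu}\rVert_{2}\le \lVert f-g\rVert_{2}.
\end{equation*}

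Next I would take $g$ to be (the element of $Y$ represented by) the interpolant $P_{\Hat{m}-1}$ of $f$ at the $\Hat{m}$ roots of the degree-$\Hat{m}$ shifted Chebyshev polynomial on $[0,1]$, the count of nodes matching $\dim Y=\Hat{m}=2^{k-1}M$. Lemma \ref{best approx} applied with $a=0$, $b=1$, $N=\Hat{m}$ furnishes the pointwise bound $|f(\zeta)-P_{\Hat{m}-1}(\zeta)|\le \frac{1}{\Hat{m}!\,2^{2\Hat{m}-1}}\max_{\xi\in[0,1]}|f^{(\Hat{m})}(\xi)|$ valid for all $\zeta\in[0,1]$. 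Squaring, integrating over $[0,1]$, and using $\int_{0}^{1}d\zeta=1$ gives
\begin{equation*}
\lVert f-\Hat{C}^{T}\Hat{\Psi}^{\mu}\rVert_{2}^{2}\le \lVert f-P_{\Hat{m}-1}\rVert_{2}^{2}\le\Big(\frac{\max_{\xi\in[0,1]}|f^{(\Hat{m})}(\xi)|}{\Hat{m}!\,2^{2\Hat{m}-1}}\Big)^{2},
\end{equation*}
so that, on setting $\Tilde{\mathcal{M}}=\max_{\xi\in[0,1]}|f^{(\Hat{m})}(\xi)|$ (finite because $f$ is sufficiently smooth on $[0,1]$) and taking square roots, the asserted estimate follows.

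The step that genuinely needs attention — and the one I expect to be the real obstacle — is the middle one: showing that the degree-$(\Hat{m}-1)$ Chebyshev interpolant (or a surrogate with the same error order) actually belongs to the span $Y$, since $Y$ consists of piecewise functions that are polynomials in $\zeta^{\mu}$ of local degree at most $M-1$ rather than global polynomials of degree $\Hat{m}-1$. When $\mu=1$ and $k=1$ this inclusion is immediate; in the general fractional, multilevel case one would instead use a piecewise Chebyshev interpolant on the $2^{k-1}$ subintervals together with the change of variable $\zeta=x^{\mu}$, apply Lemma \ref{best approx} on each piece, and absorb the resulting constants into a single $\Tilde{\mathcal{M}}$ — which still yields a bound of the stated form. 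Beyond that point the argument is routine: the orthogonality/best-approximation fact is elementary linear algebra, and passing from the pointwise interpolation bound to the $L^{2}$ bound is a one-line integration.
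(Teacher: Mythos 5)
Your proposal takes essentially the same route as the paper's own proof: both rest on the least-squares optimality of the FBWs expansion, compare it against the Chebyshev interpolant $P_{\Hat{m}-1}$, and invoke Lemma \ref{best approx} to pass from the pointwise interpolation bound to the $L^{2}$ estimate. The obstacle you flag at the end — that for fractional $\mu$ the span $Y$ consists of piecewise polynomials in $\zeta^{\mu}$ of local degree at most $M-1$, so the degree-$(\Hat{m}-1)$ interpolant need not be an admissible competitor and the inequality $\lVert f-\Hat{C}^{T}\Hat{\Psi}^{\mu}\rVert_{2}\le\lVert f-P_{\Hat{m}-1}\rVert_{2}$ is not justified — is present, unacknowledged, in the paper as well, which simply asserts on each subinterval that $\Hat{C}^{T}\Hat{\Psi}^{\mu}$ is ``a polynomial of degree at most $\Hat{m}$'' with the least-square property and then substitutes $P_{\Hat{m}-1}$; your explicit identification of this step as the real issue is the only substantive difference.
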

\begin{proof}
We are able to write,
$$\int_{0}^{1}\left({f}({\zeta})-\Hat{C}^{T} \Hat{\Psi}^{\mu}({\zeta})\right)^2d{\zeta}= \sum_{{n}=1}^{2^{{k}-1}M}\int_{\left(\frac{{n}-1}{2^{{k}-1}}\right)^{\frac{1}{\mu}}}^{\left(\frac{{n}}{2^{{k}-1}}\right)^{\frac{1}{\mu}}}\left({f}({\zeta})-\Hat{C}^{T} \Hat{\Psi}^{\mu}({\zeta})\right)^2d{\zeta},$$
On the subinterval $\left[\left(\frac{{n}-1}{2^{{k}-1}}\right)^{\frac{1}{\mu}},\left(\frac{{n}}{2^{{k}-1}}\right)^{\frac{1}{\mu}}\right)$, $\Hat{C}^{T}\Hat{\Psi}^{\mu}({\zeta})$
is a polynomial of degree at most $\Hat{m}= 2^{{k}-1}M$, that approximates ${f}$ with the least-square property.\\\\
As ${P}_{\Hat{m}-1}({\zeta})$ is the best approximation to ${f}({\zeta})$, that agrees with ${f}({\zeta})$ at the zeros of shifted Chebyshev polynomial. Therefore, the following has been obtained by using Lemma \ref{best approx}:
\begin{equation*}
\begin{split}
\lVert{f}({\zeta})-\Hat{C}^{T} \Hat{\Psi}^{\mu}({\zeta})\rVert_{2}&=\sum_{{n}=1}^{2^{{k}-1}M}\int_{\left(\frac{{n}-1}{2^{{k}-1}}\right)^{\frac{1}{\mu}}}^{\left(\frac{{n}}{2^{{k}-1}}\right)^{\frac{1}{\mu}}}\left({f}({\zeta})-\Hat{C}^{T} \Hat{\Psi}^{\mu}({\zeta})\right)^2d{\zeta} \\
& \leq\sum_{{n}=1}^{2^{{k}-1}M}\int_{\left(\frac{{n}-1}{2^{{k}-1}}\right)^{\frac{1}{\mu}}}^{\left(\frac{{n}}{2^{{k}-1}}\right)^{\frac{1}{\mu}}}\left({f}({\zeta})-{P}_{\Hat{m}-1}({\zeta})\right)^2d{\zeta} \\
&\leq \int_{0}^{1}\Big(\dfrac{1}{\Hat{m}!2^{2\Hat{m}-1}}\max_{\xi\in [0, 1]}|{f}^{(\Hat{m})}(\xi)|\Big)^2 d{\zeta}.
\end{split}   
\end{equation*}
Let us consider that there exists a real number $\Tilde{\mathcal{M}}$ such that
$$\max_{\xi\in[0, 1]} |{f}^{(\Hat{m})}(\xi)| \leq \Tilde{\mathcal{M}}.$$
Thus,
\begin{align*}
\begin{split}
\lVert{f}({\zeta})-\Hat{C}^{T}\Hat{\Psi}^{\mu}({\zeta})\rVert_{2}^{2} & = \int_{0}^{1}\left({f}({\zeta})-\Hat{C}^{T} \Hat{\Psi}^{\mu}({\zeta})\right)^2d{\zeta}  \\
&\leq \int_{0}^{1}\Big(\dfrac{\Tilde{\mathcal{M}}}{\Hat{m}!2^{2\Hat{m}-1}}\Big)^2 d{\zeta}.\\
\end{split}    
\end{align*}
This implies that
\begin{align*}
\lVert{f}({\zeta})-\Hat{C}^{T} \Hat{\Psi}^{\mu}({\zeta})\rVert_{2} \leq \dfrac{\Tilde{\mathcal{M}}}{\Hat{m}!2^{2\Hat{m}-1}}.
\end{align*}
\end{proof}
Now an error estimate concerning  $\displaystyle |\inf_{\Gamma_{\Hat{m}}} \Tilde{J}-\inf_{\Gamma}\Tilde{J}|$  has been established   for the proposed method.
By considering $z_{1}({\zeta})= {^{C}}\mathcal{D}_{{\zeta}}^{\mu}x_{1}({\zeta})$ and $z_{2}({\zeta})= {^{C}}\mathcal{D}_{{\zeta}}^{\mu}x_{2}({\zeta})$ the problem Eqs. \eqref{1.1}, \eqref{1.2} and \eqref{1.3} are equivalent to the following problem:
\begin{equation}
\begin{split}
\min\Tilde{J} & = \int_{0}^{1} \mathcal{F}\left({\zeta}, \mathcal{I}^{\mu}(z_{1}({\zeta}))+x_{1}(0), \mathcal{I}^{\mu}(z_{2}({\zeta}))+x_{2}(0), u({\zeta})\right) d{\zeta}\\
&= \int_{0}^{1} \mathcal{F}({\zeta}, z_{1}({\zeta}),z_{2}({\zeta}), u({\zeta})) d{\zeta},
\end{split}\label{7.1}
\end{equation}
\begin{align}  
\mathcal{G}_{1}\left({^{C}}\mathcal{D}_{{\zeta}}^{{\mu}}z_{1}({\zeta}),{^{C}}\mathcal{D}_{{\zeta}}^{{\mu}}z_{2}({\zeta}) \right)= \mathcal{H}_{1}({\zeta}, z_{1}({\zeta}), z_{2}({\zeta}), u({\zeta})),\label{7.2}
\end{align}
\begin{align}  
\mathcal{G}_{2}\left({^{C}}\mathcal{D}_{{\zeta}}^{{\mu}}z_{1}({\zeta}),{^{C}}\mathcal{D}_{{\zeta}}^{{\mu}}z_{2}({\zeta}) \right)= \mathcal{H}_{2}({\zeta}, z_{1}({\zeta}), z_{2}({\zeta}), u({\zeta})).\label{7.3}
\end{align}
\\
\begin{theorem}
 The set $\Gamma$ is consisting of all Lipschitz functions $(z_{1}({\zeta}), z_{2}({\zeta}),  u({\zeta}))$ that satisfy Eqs. \eqref{7.2} and \eqref{7.3} and $\Gamma_{\Hat{m}}$ is a subset of $\Gamma$ consisting of all functions   $\displaystyle\left(\sum_{k=1}^{\Hat{m}}z_{1k}^{\star}\Psi_{k}^{{\mu}}({\zeta}), \sum_{k=1}^{\Hat{m}}z_{2k}^{\star}\Psi_{k}^{{\mu}}({\zeta}),   \\
 \sum_{k=1}^{\Hat{m}}u_{k}^{\star}\Psi_{k}^{{\mu}}({\zeta})\right)$. Then there exists real
numbers $\Tilde{\mathcal{M}}^{\star}$ and $\Tilde{\mathcal{L}} >0$, such that
\begin{align*}
\displaystyle|\inf_{\Gamma_{\Hat{m}}}\Tilde{J} - \displaystyle\inf_{\Gamma}\Tilde{J}|\leq \dfrac{\Tilde{\mathcal{L}}\Tilde{\mathcal{M}}^{\star}}{\Hat{m}!2^{2\Hat{m}-1}}.
\end{align*}
\end{theorem}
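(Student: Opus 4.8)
The plan is to play the trivial inclusion $\Gamma_{\Hat{m}}\subseteq\Gamma$ against the least-squares approximation estimate of Lemma \ref{wavelets best approx}, the only extra analytic ingredient being a uniform Lipschitz bound for the cost integrand $\mathcal{F}$. First I would note that, since $\Gamma_{\Hat{m}}$ sits inside $\Gamma$, every triple admissible for the reduced problem is admissible for the original one, so $\inf_{\Gamma}\Tilde{J}\leq\inf_{\Gamma_{\Hat{m}}}\Tilde{J}$ and the absolute value reduces to bounding the nonnegative quantity $\inf_{\Gamma_{\Hat{m}}}\Tilde{J}-\inf_{\Gamma}\Tilde{J}$ from above. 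To do that I would fix $\varepsilon>0$, choose $(z_{1},z_{2},u)\in\Gamma$ with $\Tilde{J}(z_{1},z_{2},u)<\inf_{\Gamma}\Tilde{J}+\varepsilon$ (taking $\varepsilon=0$ if the infimum is attained), and apply Lemma \ref{wavelets best approx} to each of the smooth components $z_{1},z_{2},u$ to obtain their FBWs truncations $z_{1}^{\star},z_{2}^{\star},u^{\star}\in\mathrm{span}\{\Psi_{k}^{{\mu}}\}_{k=1}^{\Hat{m}}$ and constants $\Tilde{\mathcal{M}}_{1},\Tilde{\mathcal{M}}_{2},\Tilde{\mathcal{M}}_{3}$ with $\lVert z_{1}-z_{1}^{\star}\rVert_{2}$, $\lVert z_{2}-z_{2}^{\star}\rVert_{2}$ and $\lVert u-u^{\star}\rVert_{2}$ each bounded by the corresponding $\Tilde{\mathcal{M}}/(\Hat{m}!\,2^{2\Hat{m}-1})$.

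Next I would pass from the approximation error to the cost error. Granting that $(z_{1}^{\star},z_{2}^{\star},u^{\star})\in\Gamma_{\Hat{m}}$ (see below), one has $\inf_{\Gamma_{\Hat{m}}}\Tilde{J}\leq\Tilde{J}(z_{1}^{\star},z_{2}^{\star},u^{\star})$, and since $\mathcal{F}$ is $C^{1}$ on the compact range of the admissible arguments it is Lipschitz there in the state and control variables with some constant $\Tilde{\mathcal{L}}>0$ (absorbing the operator norm of the bounded map $\mathcal{I}^{\mu}$ on $L^{2}[0,1]$ coming from \eqref{7.1}); hence
\begin{align*}
\left|\Tilde{J}(z_{1}^{\star},z_{2}^{\star},u^{\star})-\Tilde{J}(z_{1},z_{2},u)\right|
&\leq\Tilde{\mathcal{L}}\int_{0}^{1}\left(|z_{1}^{\star}-z_{1}|+|z_{2}^{\star}-z_{2}|+|u^{\star}-u|\right)d{\zeta}\\
&\leq\Tilde{\mathcal{L}}\left(\lVert z_{1}^{\star}-z_{1}\rVert_{2}+\lVert z_{2}^{\star}-z_{2}\rVert_{2}+\lVert u^{\star}-u\rVert_{2}\right),
\end{align*}
the last step being Cauchy--Schwarz on $[0,1]$. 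Substituting the three bounds and setting $\Tilde{\mathcal{M}}^{\star}=\Tilde{\mathcal{M}}_{1}+\Tilde{\mathcal{M}}_{2}+\Tilde{\mathcal{M}}_{3}$ gives $\inf_{\Gamma_{\Hat{m}}}\Tilde{J}-\inf_{\Gamma}\Tilde{J}\leq\Tilde{\mathcal{L}}\,\Tilde{\mathcal{M}}^{\star}/(\Hat{m}!\,2^{2\Hat{m}-1})+\varepsilon$, and letting $\varepsilon\to0$, together with $\inf_{\Gamma}\Tilde{J}\leq\inf_{\Gamma_{\Hat{m}}}\Tilde{J}$, yields the two-sided bound in the statement.

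The hard part is justifying $(z_{1}^{\star},z_{2}^{\star},u^{\star})\in\Gamma_{\Hat{m}}$: the FBWs truncation of a feasible triple need not satisfy the nonlinear dynamical relations \eqref{7.2}--\eqref{7.3} exactly. I would handle this by showing that the truncation introduces a constraint defect that is again $O\big((\Hat{m}!\,2^{2\Hat{m}-1})^{-1}\big)$ (another application of Lemma \ref{wavelets best approx} to $\mathcal{G}_{i}$ and $\mathcal{H}_{i}$), and then correcting the coefficients by a perturbation of the same order using a local surjectivity/implicit-function hypothesis on the maps $\mathcal{G}_{i}$; this only rescales $\Tilde{\mathcal{M}}^{\star}$ and $\Tilde{\mathcal{L}}$ by fixed constants. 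If one instead reads the definition of $\Gamma_{\Hat{m}}$ as already incorporating exact feasibility for the reduced variables, this step is moot and everything else — the Lipschitz bound on $\mathcal{F}$ and the Cauchy--Schwarz step — is entirely routine.
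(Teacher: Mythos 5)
Your proof follows essentially the same route as the paper: the inclusion $\Gamma_{\Hat{m}}\subseteq\Gamma$ reduces the claim to evaluating $\Tilde{J}$ at the FBWs truncation of a (near-)minimizer, the Lipschitz property of $\mathcal{F}$ transfers the $L^{2}$ approximation bound of Lemma \ref{wavelets best approx} to the cost, and $\Tilde{\mathcal{M}}^{\star}=\Tilde{\mathcal{M}}_{1}+\Tilde{\mathcal{M}}_{2}+\Tilde{\mathcal{M}}_{3}$. The refinements you add --- the $\varepsilon$-approximate minimizer in place of an attained infimum, the Cauchy--Schwarz step that makes a pointwise Lipschitz hypothesis usable, and above all the observation that the truncated triple need not satisfy the constraints \eqref{7.2}--\eqref{7.3} and hence need not lie in $\Gamma_{\Hat{m}}$ --- address real gaps that the paper's own argument passes over in silence, the last one in particular being assumed away there without comment.
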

\begin{proof}
Assume that $\mathcal{F}$ is Lipschitz  continuous function with respect to $(z_{1}({\zeta}), z_{2}({\zeta}), u({\zeta}))$ with a Lipschitz constant $\Tilde{\mathcal{L}}$.
\begin{align}\nonumber
|\mathcal{F}({\zeta}, z_{1}, z_{2}, u)-\mathcal{F}({\zeta},   \Tilde{z}_{1}, \Tilde{z}_{2},  \Tilde{u})|\leq &\Tilde{\mathcal{L}}\left(||z_{1}-\Tilde{z}_{1}||_{2}+||z_{2}-\Tilde{z}_{2}||_{2}+||u-\Tilde{u}||_{2}\right),\label{7.4}
 \end{align}   
for all ${\zeta}\in [0,1]$, for all $\Tilde{z}_{1}, \Tilde{z}_{2},  \Tilde{u}, z_{1}, z_{2}, u $  belong to $L^2[0,1]$. Since $\Gamma_{\Hat{m}} \subseteq \Gamma $,
 therefore $\displaystyle\inf_{\Gamma_{\Hat{m}}}\Tilde{J}\geq $ 
 $\displaystyle\inf_{\Gamma} \Tilde{J} $.
 \\
Let $(z_{1}({\zeta}), z_{2}({\zeta}), u({\zeta}))$ be the exact result for state functions and control function
and
$\displaystyle \Big(\sum_{k=1}^{\Hat{m}}z_{1k}^{\star}\Psi_{k}^{{\mu}}({\zeta}),\\ \sum_{k=1}^{\Hat{m}}z_{2k}^{\star}\Psi_{k}^{{\mu}}({\zeta}), \sum_{k=1}^{\Hat{m}}u_{k}^{\star}\Psi_{k}^{{\mu}}({\zeta})\Big)$ be the FBWs expansion of $(z_{1}({\zeta}), z_{2}({\zeta}), u({\zeta}))$, then

\begin{align*}
\begin{split}
|\displaystyle\inf_{\Gamma_{\Hat{m}}}\Tilde{J} - \displaystyle& \inf_{\Gamma}\Tilde{J}| \leq \Tilde{J}\displaystyle \left(\sum_{k=1}^{\Hat{m}}z_{1k}^{\star}\Psi_{k}^{{\mu}}({\zeta}), \sum_{k=1}^{\Hat{m}}z_{2k}^{\star}\Psi_{k}^{{\mu}}({\zeta}),  \sum_{k=1}^{\Hat{m}}u_{k}^{\star}\Psi_{k}^{{\mu}}({\zeta})\right)-\Tilde{J}(z_{1}({\zeta}), z_{2}({\zeta}),  u({\zeta}))\\  
= & \left|\int_{0}^{1}\mathcal{F}\displaystyle \left({\zeta}, \sum_{k=1}^{\Hat{m}}z_{1k}^{\star}\Psi_{k}^{{\mu}}({\zeta}), \sum_{k=1}^{\Hat{m}}z_{2k}^{\star}\Psi_{k}^{{\mu}}({\zeta}),  \sum_{k=1}^{\Hat{m}}u_{k}^{\star}\Psi_{k}^{{\mu}}({\zeta})\right)d{\zeta}
-\int_{0}^{1} \mathcal{F}({\zeta}, (z_{1}({\zeta}), z_{2}({\zeta}), u({\zeta}))d{\zeta}\right| \\
\end{split}
\end{align*}
\begin{align*}
\begin{split}
\leq &\int_{0}^{1}\left|\mathcal{F}\left({\zeta}, \sum_{k=1}^{\Hat{m}}z_{1k}^{\star}\Psi_{k}^{{\mu}}({\zeta}), z_{2}({\zeta}), u({\zeta}))\right)- \mathcal{F}({\zeta}, (z_{1}({\zeta}), z_{2}({\zeta}),  u({\zeta}))\right|d{\zeta}\\
& + \int_{0}^{1}\left|\mathcal{F}\left({\zeta}, \sum_{k=1}^{\Hat{m}}z_{1k}^{\star}\Psi_{k}^{{\mu}}({\zeta}), \sum_{k=1}^{\Hat{m}}z_{2k}^{\star}\Psi_{k}^{{\mu}}({\zeta}), u({\zeta}))\right) - \mathcal{F}\left({\zeta}, \sum_{k=1}^{\Hat{m}}z_{1k}^{\star}\Psi_{k}^{{\mu}}({\zeta}), z_{2}({\zeta}), u({\zeta}))\right)\right|d{\zeta} \\
&+\int_{0}^{1} \left|\mathcal{F}\displaystyle \left({\zeta}, \sum_{k=1}^{\Hat{m}}z_{1k}^{\star}\Psi_{k}^{{\mu}}({\zeta}), \sum_{k=1}^{\Hat{m}}z_{2k}^{\star}\Psi_{k}^{{\mu}}({\zeta}),  \sum_{k=1}^{\Hat{m}}u_{k}^{\star}\Psi_{k}^{{\mu}}({\zeta})\right)\right.\\ 
&-\left.\mathcal{F}\displaystyle \left({\zeta}, \sum_{k=1}^{\Hat{m}}z_{1k}^{\star}\Psi_{k}^{{\mu}}({\zeta}), \sum_{k=1}^{\Hat{m}}z_{2k}^{\star}\Psi_{k}^{{\mu}}({\zeta}), u({\zeta})\right) \right| d{\zeta},
\end{split}
\end{align*}
by using Lemma (7.2)
\begin{align*}
\begin{split}
|\displaystyle\inf_{\Gamma_{\Hat{m}}}\Tilde{J} - \displaystyle\inf_{\Gamma}\Tilde{J}| & \leq \int_{0}^{1} \Tilde{\mathcal{L}} \left(||z_{1} -  \sum_{k=1}^{\Hat{m}}z_{1k}^{\star}\Psi_{k}^{{\mu}}({\zeta})||_{2}+||z _{2} - \sum_{k=1}^{\Hat{m}}z_{2k}^{\star}\Psi_{k}^{{\mu}}({\zeta})||_{2}+||u-\sum_{k=1}^{\Hat{m}}u_{k}^{\star}\Psi_{k}^{{\mu}}(\zeta)|_{2}\right)d{\zeta}\\
 & \leq \Tilde{\mathcal{L}} \left(\dfrac{\Tilde{\mathcal{M}}_{1}}{\Hat{m}!2^{2\Hat{m}-1}}+\dfrac{\Tilde{\mathcal{M}}_{2}}{\Hat{m}!2^{2\Hat{m}-1}}+\dfrac{\Tilde{\mathcal{M}}_{3}}{\Hat{m}!2^{2\Hat{m}-1}}\right)\\
&=  \Tilde{\mathcal{L}} \left(\dfrac{\Tilde{\mathcal{M}}_{1}+\Tilde{\mathcal{M}}_{2}+\Tilde{\mathcal{M}}_{3}}{\Hat{m}!2^{2\Hat{m}-1}}\right),
\end{split}  
\end{align*}
where $\Tilde{\mathcal{M}}_{1}=\displaystyle\max_{{\zeta}\in[0,1]}|z_{1}^{\Hat{m}}({\zeta})|$, $\Tilde{\mathcal{M}}_{2}=\displaystyle\max_{{\zeta}\in[0,1]}|z_{2}^{\Hat{m}}({\zeta})|$
and  $ \Tilde{\mathcal{M}}_{3}=\displaystyle\max_{{\zeta}\in[0,1]}|u^{\Hat{m}}({\zeta})|$.
\begin{align*}
|\displaystyle\inf_{\Gamma_{\Hat{m}}}\Tilde{J} - \displaystyle\inf_{\Gamma}\Tilde{J}|\leq \dfrac{\Tilde{\mathcal{M}}^{\star}}{\Hat{m}!2^{2\Hat{m}-1}}, 
\end{align*}
where  $\Tilde{\mathcal{M}}^{\star}= \Tilde{\mathcal{M}}_{1}+ \Tilde{\mathcal{M}}_{2}+\Tilde{\mathcal{M}}_{3}$.\\\\
\end{proof}
\section{ Convergence analysis}
In this section, convergence analysis has been analyzed for multi-dimensional FOCP.
\begin{theorem}
The approximate results $z_{1}({\zeta})\approx\Hat{Z}_{1}^T\Hat{\Psi}^{{\mu}}({\zeta}), z_{2}({\zeta})\approx\Hat{Z}_{2}^T\Hat{\Psi}^{{\mu}}({\zeta})$ and  $u({\zeta})\approx\Hat{U}^T\Hat{\Psi}^{{\mu}}({\zeta})$, converge respectively to the exact result as $\Hat{m}$, the number of the FBWs tends to $\infty$.\\
\end{theorem}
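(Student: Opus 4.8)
The plan is to leverage the two approximation lemmas already established (Lemmas \ref{best approx} and \ref{wavelets best approx}) together with Theorem 7.3. First I would recall that, by Lemma \ref{wavelets best approx} applied separately to each of the sufficiently smooth functions $z_{1}({\zeta}) = {^{C}}\mathcal{D}_{{\zeta}}^{{\mu}}x_{1}({\zeta})$, $z_{2}({\zeta}) = {^{C}}\mathcal{D}_{{\zeta}}^{{\mu}}x_{2}({\zeta})$ and $u({\zeta})$, there exist constants $\Tilde{\mathcal{M}}_{1}, \Tilde{\mathcal{M}}_{2}, \Tilde{\mathcal{M}}_{3}$ such that the $L^{2}$-errors $\lVert z_{1}-\Hat{Z}_{1}^{T}\Hat{\Psi}^{{\mu}}\rVert_{2}$, $\lVert z_{2}-\Hat{Z}_{2}^{T}\Hat{\Psi}^{{\mu}}\rVert_{2}$ and $\lVert u-\Hat{U}^{T}\Hat{\Psi}^{{\mu}}\rVert_{2}$ are each bounded by a quantity of the form $\Tilde{\mathcal{M}}_{i}/(\Hat{m}!\,2^{2\Hat{m}-1})$. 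The key observation is that this bound tends to $0$ as $\Hat{m}\to\infty$, because the factorial $\Hat{m}!$ (and the exponential $2^{2\Hat{m}-1}$) in the denominator grows faster than any fixed constant in the numerator; hence each of the three approximations converges in the $L^{2}$-norm to the respective exact function.

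Next I would transfer this convergence of the ``derivative variables'' and the control to convergence of the state functions and the cost. Using the Riemann--Liouville integral representation $x_{i}({\zeta}) = \mathcal{I}^{{\mu}}(z_{i}({\zeta})) + x_{i}(0)$ together with the operational-matrix identity $\mathcal{I}^{{\mu}}(\Hat{\Psi}^{{\mu}}) \approx \mathcal{P}^{{\mu}}\Hat{\Psi}^{{\mu}}$ from Theorem 5.1, I would show that the approximate states $\Hat{x}_{i}({\zeta}) = (\Hat{Z}_{i}^{T}\mathcal{P}^{{\mu}} + d_{i}^{T})\Hat{\Psi}^{{\mu}}({\zeta})$ converge to $x_{i}({\zeta})$, since $\mathcal{I}^{{\mu}}$ is a bounded operator on $L^{2}[0,1]$ (its norm is at most $1/\Gamma({\mu}+1)$) and therefore $\lVert x_{i}-\Hat{x}_{i}\rVert_{2}\le \tfrac{1}{\Gamma({\mu}+1)}\lVert z_{i}-\Hat{Z}_{i}^{T}\Hat{\Psi}^{{\mu}}\rVert_{2} + o(1)\to 0$. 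For the cost functional, I would invoke Theorem 7.3 directly: it gives $|\inf_{\Gamma_{\Hat{m}}}\Tilde{J}-\inf_{\Gamma}\Tilde{J}|\le \Tilde{\mathcal{L}}\,\Tilde{\mathcal{M}}^{\star}/(\Hat{m}!\,2^{2\Hat{m}-1})$, and again letting $\Hat{m}\to\infty$ forces the right-hand side to $0$, so the approximate optimal cost converges to the true optimal cost.

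Finally I would package these observations into the statement. The chain is: $\Hat{m}\to\infty$ $\Rightarrow$ the denominator $\Hat{m}!\,2^{2\Hat{m}-1}\to\infty$ $\Rightarrow$ the Lemma \ref{wavelets best approx} bounds vanish $\Rightarrow$ $\Hat{Z}_{1}^{T}\Hat{\Psi}^{{\mu}}\to z_{1}$, $\Hat{Z}_{2}^{T}\Hat{\Psi}^{{\mu}}\to z_{2}$, $\Hat{U}^{T}\Hat{\Psi}^{{\mu}}\to u$ in $L^{2}[0,1]$, and simultaneously (via Theorem 7.3) the cost converges. I would also note that because the dynamical-system equations \eqref{7.2}--\eqref{7.3} are satisfied exactly by the elements of $\Gamma_{\Hat{m}}$, the limit of the approximate triple automatically lies in $\Gamma$ and realizes the infimum, so the limit is in fact the exact optimal solution. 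The main obstacle I anticipate is not any single hard estimate but rather the bookkeeping: one must be careful that the functions $z_{1}, z_{2}, u$ are genuinely smooth enough (i.e.\ in $C^{\Hat{m}}$ with derivatives bounded uniformly in $\Hat{m}$, or at least that $\Tilde{\mathcal{M}}_{i}$ can be taken independent of $\Hat{m}$) for Lemma \ref{wavelets best approx} to apply with a fixed constant; if the derivative bounds were allowed to grow like $\Hat{m}!$ the argument would collapse, so the smoothness hypothesis inherited from Theorem 7.3 (Lipschitz/analytic-type regularity) is what must be leaned on here.
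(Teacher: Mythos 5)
Your argument is sound at the paper's level of rigor, but it is not the route the paper takes. You prove convergence by letting $\Hat{m}\to\infty$ in the quantitative bounds: Lemma \ref{wavelets best approx} forces the $L^2$ projection errors of $z_{1}$, $z_{2}$, $u$ to vanish, boundedness of $\mathcal{I}^{\mu}$ transfers this to the states, and the error-bound theorem of Section 7 handles the cost. The paper instead gives a qualitative Ritz-type argument: it observes $\Gamma_{1}\subseteq\Gamma_{2}\subseteq\dots\subseteq\Gamma$, so the infima $\inf_{\Gamma_{\Hat{m}}}\Tilde{J}_{\Hat{m}}$ form a decreasing bounded sequence converging to some $\omega\geq\inf_{\Gamma}\Tilde{J}$, and then pins down $\omega=\inf_{\Gamma}\Tilde{J}$ by an $\epsilon$-argument that uses only the continuity of $\Tilde{J}$ and the density of the FBW expansions. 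What each approach buys: yours yields an explicit rate, but it inherits exactly the weakness you flagged yourself --- the constant $\Tilde{\mathcal{M}}$ in Lemma \ref{wavelets best approx} is $\max|f^{(\Hat{m})}|$ and therefore depends on $\Hat{m}$, so without a uniform (analyticity-type) bound on the derivatives the quantitative bound need not tend to zero; the paper's route needs no such derivative control, only density, which makes it the safer argument here. Note also that both proofs leave the same gap relative to the stated theorem: Lemma \ref{wavelets best approx} controls the projection of the \emph{exact} solution, and the paper controls the optimal \emph{values} $\inf_{\Gamma_{\Hat{m}}}\Tilde{J}_{\Hat{m}}$; neither directly shows that the computed minimizers over $\Gamma_{\Hat{m}}$ converge to the exact minimizer, which would additionally require some strict convexity or coercivity of $\Tilde{J}$.
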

\begin{proof} 
 Suppose $\Gamma _{\Hat{m}}$ is the set of all $\left(\Hat{Z}_{1}^T\Hat{\Psi}^{{\mu}}({\zeta}),\Hat{Z}_{2}^T\Hat{\Psi}^{{\mu}}({\zeta}), \Hat{U}^T\Hat{\Psi}^{{\mu}}({\zeta})\right) $ which satisfies the constraint given in Eqs. \eqref{7.2} and \eqref{7.3}.\\ 
Using convergence property of FBWs, for each 
$\left(\Hat{Z}_{11}^T\Hat{\Psi}^{{\mu}}({\zeta}),\Hat{Z}_{12}^T\Hat{\Psi}^{{\mu}}({\zeta}), \Hat{U}_{1}^T\Hat{\Psi}^{{\mu}}({\zeta})\right)
\in \Gamma_{\Hat{m}} $, there exists a unique pair of functions
 $(z_{11}({\zeta}), z_{12}({\zeta}), u_{1}({\zeta})) $  such that
 $$\left(\Hat{Z}_{11}^T\Hat{\Psi}^{{\mu}}({\zeta}),\Hat{Z}_{12}^T\Hat{\Psi}^{{\mu}}({\zeta}), \Hat{U}_{1}^T\Hat{\Psi}^{{\mu}}({\zeta})\right)\to (z_{11}({\zeta}), z_{12}({\zeta}),  u_{1}({\zeta})),$$ 
 as $ \Hat{m}\to \infty $.
 It is obvious that $(z_{11}({\zeta}), z_{12}({\zeta}),  u_{1}({\zeta}))\in \Gamma,$ where $\Gamma$ is the set of all results that satisfy the constraint given in Eq. \eqref{7.2}. So as $\Hat{m}\to \infty,$ each element in $\Gamma_{\Hat{m}}$ tends to an element in $\Gamma$.\\\\
 Furthermore,  as $\Hat{m}\to \infty$, then \\
   $$\Tilde{J}_{1}^{\Hat{m}} = \Tilde{J}\left(\Hat{Z}_{11}^T\Hat{\Psi}^{{\mu}}({\zeta}),\Hat{Z}_{12}^T\Hat{\Psi}^{{\mu}}({\zeta}), \Hat{U}_{1}^T\Hat{\Psi}^{{\mu}}({\zeta})\right)\to \Tilde{J}_{1},$$
 where $\Tilde{J}_{1}^{\Hat{m}}$ is the value of the cost
function corresponding to the pair $\left(\Hat{Z}_{11}^T\Hat{\Psi}^{{\mu}}({\zeta}),\Hat{Z}_{12}^T\Hat{\Psi}^{{\mu}}({\zeta}), \Hat{U}_{1}^T\Hat{\Psi}^{{\mu}}({\zeta})\right)$ and $\Tilde{J}_{1}$ is the objective value corresponding to the feasible result  $(z_{11}({\zeta}), z_{12}({\zeta}), u_{1}({\zeta}))$.
\begin{align*}
\Gamma_{1} \subseteq \Gamma_{2} \subseteq \dots \subseteq \Gamma_{\Hat{m}}\subseteq \Gamma_{\Hat{m}+1}\subseteq \dots\subseteq \Gamma,
\end{align*} 
then
\begin{align*}
\inf_{\Gamma_{1}}{\Tilde{J}_{1}} \geq \inf_{\Gamma_{2}}{\Tilde{J}_{2}} \geq \dots \geq \inf_{\Gamma_{\Hat{m}}}{\Tilde{J}_{\Hat{m}}} \geq \inf_{\Gamma_{\Hat{m}+1}}{\Tilde{J}_{\Hat{m}+1}}\geq \dots\geq\inf_{\Gamma}{\Tilde{J}},
\end{align*}
which is a decreasing and bounded sequence. Since every bounded monotone sequence is convergent.
Therefore, it converges to a number $\omega$, such that
$$ \omega \geq \inf_{\Gamma}\Tilde{J}.$$\\
Next, we have to show that 
$$ \omega=\lim_{\Hat{m}\to \infty}\inf_{\Gamma_{\Hat{m}}}{\Tilde{J}_{\Hat{m}}} =\inf_{\Gamma}\Tilde{\Tilde{J}}. $$
Given $ \epsilon >0 $, let $\left(z_{1}({\zeta}), z_{2}({\zeta}), u({\zeta})\right)\in \Gamma $ such that 
\begin{equation}
  \Tilde{J}(z_{1}({\zeta}), z_{2}({\zeta}),  u({\zeta})) < \inf_{\Gamma}\Tilde{J}+\epsilon.\label{8.1}
\end{equation}
Since $\Tilde{J}(z_{1}({\zeta}), z_{2}({\zeta}),  u_{1}({\zeta}))$ is continuous, for this value
of $\epsilon$, there exists $K(\epsilon)$ so that if $\Hat{m}>K(\epsilon), $ then\\
\begin{align*}
\left|\Tilde{J}(z_{1}({\zeta}), z_{2}({\zeta}),  u({\zeta}))-\Tilde{J}\left(\Hat{Z}_{1}^T\Hat{\Psi}^{{\mu}}({\zeta}),\Hat{Z}_{2}^T\Hat{\Psi}^{{\mu}}({\zeta}), \Hat{U}^T\Hat{\Psi}^{{\mu}}({\zeta})\right)\right|< \epsilon.
\end{align*} 
This implies that
\begin{equation}
\Tilde{J}\left(\Hat{Z}_{1}^T\Hat{\Psi}^{{\mu}}({\zeta}),\Hat{Z}_{2}^T\Hat{\Psi}^{{\mu}}({\zeta}), \Hat{U}^T\Hat{\Psi}^{{\mu}}({\zeta})\right) < \Tilde{J}(z_{1}({\zeta}), z_{2}({\zeta}),  u({\zeta}))+\epsilon.
\label{8.2} 
\end{equation}
Using Eqs. \eqref{8.1} and \eqref{8.2}, it follows that
\begin{align}
\Tilde{J}\left(\Hat{Z}_{1}^T\Hat{\Psi}^{{\mu}}({\zeta}),\Hat{Z}_{2}^T\Hat{\Psi}^{{\mu}}({\zeta}),\Hat{U}^T\Hat{\Psi}^{{\mu}}({\zeta})\right)   < \Tilde{J}(z_{1}({\zeta}), z_{2}({\zeta}),  u({\zeta}))+\epsilon<\inf_{\Gamma}\Tilde{J}+2\epsilon.  \label{8.3} 
\end{align}
\\
Now
\begin{align}
\inf_{\Gamma}\Tilde{J}\leq  \inf_{\Gamma_{\Hat{m}}}\Tilde{J}_{\Hat{m}}\leq \Tilde{J}\left(\Hat{Z}_{1}^T\Hat{\Psi}^{{\mu}}({\zeta}),\Hat{Z}_{2}^T\Hat{\Psi}^{{\mu}}({\zeta}), \Hat{U}_{1}^T\Hat{\Psi}^{{\mu}}({\zeta})\right).\label{8.4}
\end{align}
From Eqs. \eqref{8.3} and \eqref{8.4}, one get
\begin{align*}
   \inf_{\Gamma}\Tilde{J}\leq  \inf_{\Gamma_{\Hat{m}}}\Tilde{J}_{\Hat{m}}<\inf_{\Gamma}\Tilde{J}+2\epsilon.  
\end{align*}
Whence
\begin{align*}
  0\leq \inf_{\Gamma_{\Hat{m}}}\Tilde{J}_{\Hat{m}}-\inf_{\Gamma}\Tilde{J}< 2\epsilon, 
\end{align*}
where $\epsilon$ is chosen arbitrary.\\\\
Hence
\begin{align*}
\omega=\lim_{\Hat{m}\to \infty}\inf_{\Gamma_{\Hat{m}}}{\Tilde{J}_{\Hat{m}}} =\inf_{\Gamma}\Tilde{J}.
\end{align*}
\end{proof}

\section{Numerical Problems}
In this section, the accuracy and utility of the proposed method are demonstrated with some numerical examples.
\begin{problem}
Let the following two-dimensional fractional linear quadratic time-variant problem \textnormal{\cite{dehestani2022numerical}} 
\begin{align*}\nonumber
\min\Tilde{J}=\dfrac{1}{2}\int_{0}^{1}\left(x_{1}^2({\zeta})+x_{2}^2({\zeta})+u^2({\zeta})\right)d{\zeta},
\end{align*}
$$
^{C}\mathcal{D}_{{\zeta}}^{{\mu}}x_{1}({\zeta})=-x_{1}({\zeta})+x_{2}({\zeta})+u({\zeta}),
$$
$$^{C}\mathcal{D}_{{\zeta}}^{{\mu}}x_{2}({\zeta})=-2x_{2}({\zeta}),$$
$$x_{1}(0)=1, x_{2}(0)=1.$$   
\end{problem}  
Our aim is to find  state functions $x_{1}({\zeta})$, $ x_{2}({\zeta})$ and control function $u({\zeta})$  which minimize the cost function $\Tilde{J}$.  The exact result of the above problem for
${\mu} = 1$ as follows:
$$ x_{1}({\zeta})=-\dfrac{3}{2}e^{-2{\zeta}}+2.48164 e^{-\sqrt{2}
{\zeta}}+0.018352 e^{\sqrt{2}{\zeta}}, $$
$$ x_{2}({\zeta})=e^{-2{\zeta}}, $$
$$ u({\zeta})=\dfrac{1}{2}e^{-2{\zeta}}-1.02793e^{-\sqrt{2}{\zeta}}+0.0443056e^{\sqrt{2}{\zeta}}. $$ 
This multi-dimensional FOCP has been solved by the proposed methods using OBWs and FBWs. Table 1 shows the approximate values of the cost function for OBWs and FBWs, and it is also observed that FBWs give a better result than the OBWs. The approximate results obtained by OBWs and FBWs methods for different values of $\mu$ for state functions $x_{1}(\zeta)$, $ x_{2}(\zeta)$ and control function $u(\zeta)$ are shown in Tables 2, 3, and 4, respectively. Table 5 shows the absolute error of state and control functions when $\mu =1$ and at different values of $k $ and $M$. Fig. 1 shows the exact result when $\mu=1$ and approximate results when $\mu=0.8,0.9$, and $0.99$ of the state functions $x_{1}(\zeta), x_{2}(\zeta)$ and control function $u(\zeta)$ for the OBWs method. Fig. 2 shows the exact result when $\mu=1$ and approximate results when $\mu=0.8,0.9$, and $0.99$ of the state functions $x_{1}(\zeta), x_{2}(\zeta)$ and control function $u(\zeta)$ for FBWs method. Fig. 3 shows the exact and approximate results of the state functions $x_{1}(\zeta), x_{2}(\zeta)$ and control function $u(\zeta)$ when $k=2, M=3$ and $\mu=1$ using FBWs. Fig. 4 shows the absolute error results of state and control functions using FBWs. As can be seen in Table 5, the approximate results of the state and control functions converge to the exact result as the number of fractional Bernoulli wavelets bases increases.\\ 
\begin{figure}
\centering
\begin{subfigure}{0.45\textwidth}
    \includegraphics[width=\textwidth]{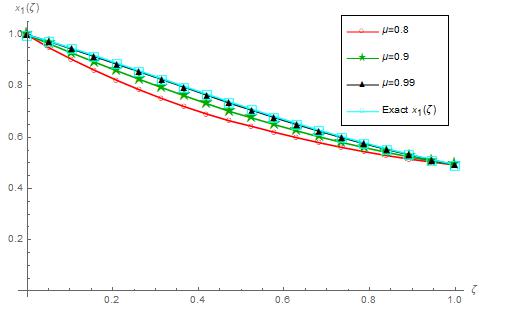}
    \caption{Approximate and exact results of $x_{1}(\zeta)$.}
    \label{1(a)p2}
\end{subfigure}
\hfill
\begin{subfigure}{0.45\textwidth}
    \includegraphics[width=\textwidth]{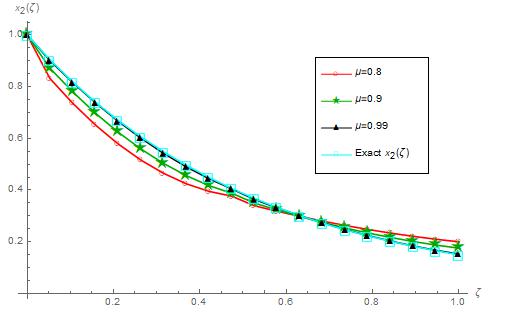}
    \caption{Approximate and exact results of  $x_{2}(\zeta)$.}
    \label{1(b)p2}
\end{subfigure}
\hfill
\begin{subfigure}{0.45\textwidth}
    \includegraphics[width=\textwidth]{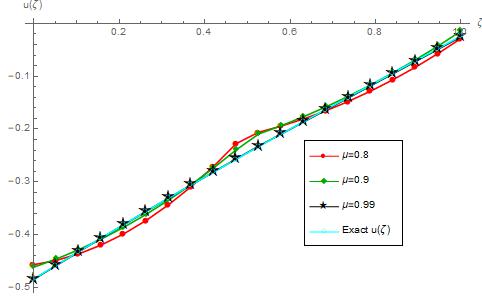}
    \caption{Approximate and exact results of $u(\zeta)$.}
    \label{1(c)p2}
\end{subfigure}
        
\caption{OBWs results when $k=2$ and $M=3$.}
\label{1p2}
\end{figure}
\begin{figure}
\centering
\begin{subfigure}{0.4\textwidth}
    \includegraphics[width=\textwidth]{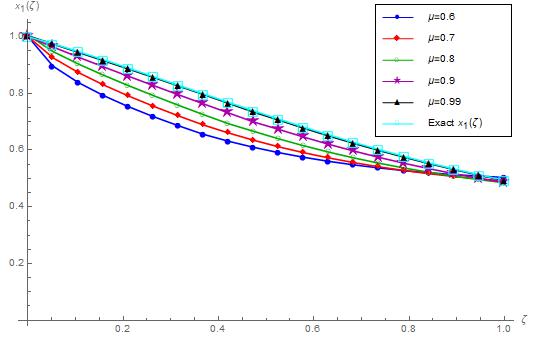}
    \caption{Approximate and exact result of $x_{1}(\zeta)$.}
    \label{2(a)p2}
\end{subfigure}
\hfill
\begin{subfigure}{0.4\textwidth}
    \includegraphics[width=\textwidth]{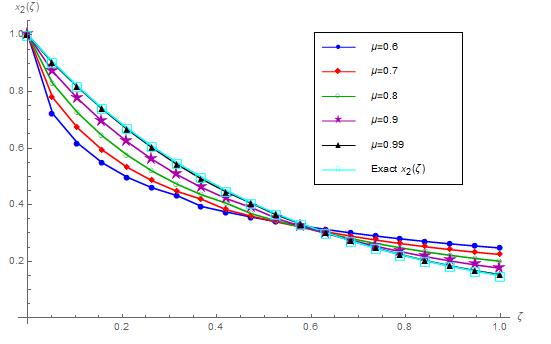}
    \caption{Approximate and exact result of $x_{1}(\zeta)$.}
    \label{2(b)p2}
\end{subfigure}
\hfill
\begin{subfigure}{0.4\textwidth}
    \includegraphics[width=\textwidth]{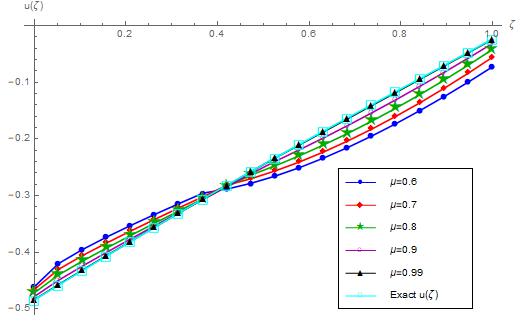}
    \caption{Approximate and exact result of $u(\zeta)$.}
    \label{2(c)p2}
\end{subfigure}
        
\caption{FBWs results when $k=2$ and $ M=3$.}
\label{2p2}
\end{figure}

\begin{figure}
\centering
\begin{subfigure}{0.4\textwidth}
    \includegraphics[width=\textwidth]{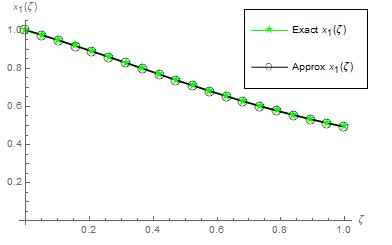}
    \caption{Exact and approximate results of $x_{1}(\zeta)$.}
    \label{3(a)p2}
\end{subfigure}
\hfill
\begin{subfigure}{0.4\textwidth}
    \includegraphics[width=\textwidth]{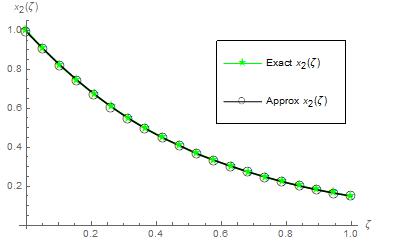}
    \caption{Exact and approximate results of $x_{1}(\zeta)$.}
    \label{3(b)p2}
\end{subfigure}
\hfill
\begin{subfigure}{0.4\textwidth}
    \includegraphics[width=\textwidth]{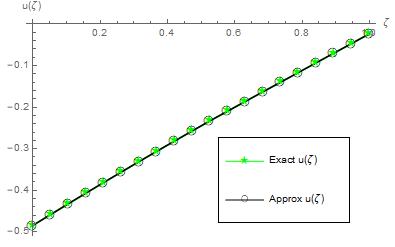}
    \caption{Exact and approximate results of $u(\zeta)$.}
    \label{3(c)p2}
\end{subfigure}
        
\caption{Exact and approximate results of  the state and control functions when $k = 2$, $ M = 3$ and $\mu = 1$ using FBWs.}
\label{3p2}
\end{figure}

\begin{figure}
\centering
\begin{subfigure}{0.4\textwidth}
    \includegraphics[width=\textwidth]{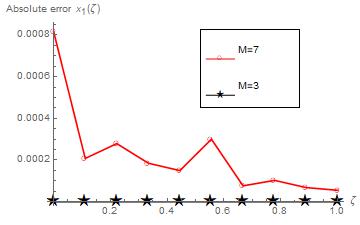}
    \caption{Absolute error of $x_{1}(\zeta)$ when $k=3$.}
    \label{4(a)p2}
\end{subfigure}
\hfill
\begin{subfigure}{0.4\textwidth}
    \includegraphics[width=\textwidth]{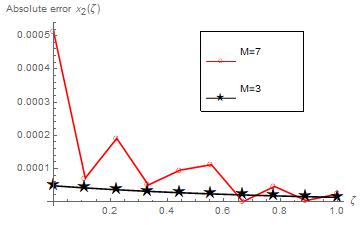}
    \caption{Absolute error of $x_{2}(\zeta)$ when $k=3$.}
    \label{4(b)p2}
\end{subfigure}
\hfill
\begin{subfigure}{0.4\textwidth}
    \includegraphics[width=\textwidth]{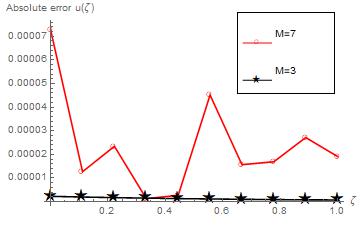}
    \caption{Absolute error of $u(\zeta) $ when $k=3$.}
    \label{4(c)p2}
\end{subfigure}
        
\caption{Absolute error of state and control functions when $k=3, M=3$ and $k=3, M=7$ using FBWs.}
\label{4p2}
\end{figure}

\begin{table}[h]
\centering
\caption{Approximate values of the cost function $\Tilde{{J}}$ for different values of  $\mu$.}
\begin{tabular}{|c|c|c|c|}\hline
$\mu $& OBWs    & FBWs \\ \cline{1-3}
\hline
1 & 0.431987 & 0.431987\\
\hline
0.99 & 0.429029 & 0.429028\\
\hline
0.9 & 0.403542 &  0.403422 \\
\hline
0.8 & 0.377656 &0.377388\\
\hline
0.7 & 0.355052 & 0.354417\\
\hline
0.6& 0.336731&0.335404 \\
\hline
0.5 & 0.324598 &0.322129\\
\hline
\end{tabular}
\end{table}

\begin{landscape}
\begin{table}
\centering
\small\addtolength{\tabcolsep}{-1pt}
\caption{Approximate results of the state function $x_{1}({\zeta})$ for $\mu=0.6, 0.7, 0.8, 0.9$ and $ 0.99.$ }
\begin{tabular}{|c|c|c|c|c|c|c|c|c|c|c|c|c|c|c|} \hline
${\zeta}$ &  \multicolumn{2}{|c|}{$\mu=0.6$} &\multicolumn{2}{|c|}{$\mu=0.7$} &\multicolumn{2}{|c|}{$\mu=0.8$}& \multicolumn{2}{|c|}{$\mu=0.9$} &\multicolumn{2}{|c|}{$\mu=0.99$}\\[0.5ex] \cline{2-11}
& OBWs &  FBWs & OBWs &  FBWs &  OBWs &  FBWs &  OBWs &  FBWs &  OBWs &  FBWs \\ \cline{2-11}
\hline
0.1 &0.839425& 0.838387& 0.873411& 0.87381& 0.903723& 0.903798& 0.928614&0.927822& 0.944683&0.944739\\
\hline
0.2&0.73625& 0.753203& 0.779968& 0.790936& 0.822034& 0.827442& 0.859362& 0.860354& 0.88579& 0.886\\
\hline
0.3 &0.663489& 0.685737& 0.706357& 0.720562& 0.750938& 0.758098& 
0.793462& 0.79508& 0.825812&0.826086\\
 \hline
0.4&0.621142& 0.630914& 0.652577& 0.660207& 0.690435& 0.692795& 
  0.730915& 0.730828& 0.764746& 0.764905\\
\hline
0.5&0.595014& 0.590831&0.615623& 0.611349& 0.642464& 0.63942& 0.67549&0.672819& 0.705649& 0.705731\\
\hline
0.6&0.568937&0.560139& 0.579379&0.572119& 0.598194& 0.592673& 
  0.624705& 0.620187& 0.649701& 0.649667\\
\hline
0.7&0.547107& 0.536896& 0.54905& 0.540895& 0.560197& 0.553602& 0.579739&0.5741& 0.598626& 0.598554\\
\hline
0.8&0.529526& 0.519791& 0.524635& 0.516546& 0.528474& 0.521422& 
0.540592& 0.534172& 0.552424&0.552359\\
\hline
0.9&0.516192& 0.507887& 0.506135& 0.498239&0.503024& 0.495532&
  0.507264& 0.500104& 0.511095& 0.511052\\
\hline
 
\end{tabular}
\end{table}

\begin{table}
\centering
\small\addtolength{\tabcolsep}{-1pt}
\caption{Approximate results of the state function $x_{2}({\zeta})$ for $\mu=0.6, 0.7, 0.8, 0.9$ and $ 0.99.$ }
\begin{tabular}{|c|c|c|c|c|c|c|c|c|c|c|c|c|c|c|} \hline
${\zeta}$ &  \multicolumn{2}{|c|}{$\mu=0.6$} &\multicolumn{2}{|c|}{$\mu=0.7$} &\multicolumn{2}{|c|}{$\mu=0.8$}& \multicolumn{2}{|c|}{$\mu=0.9$} &\multicolumn{2}{|c|}{$\mu=0.99$}\\[0.5ex] \cline{2-11}
& OBWs &  FBWs & OBWs &  FBWs &  OBWs &  FBWs &  OBWs &  FBWs &  OBWs &  FBWs \\ \cline{2-11}
\hline
0.1 &0.643398& 0.617907& 0.69029& 0.671598& 0.736724& 0.725343& 0.780671& 0.775766& 0.816923&0.816512\\
\hline
0.2&0.502437& 0.497323& 0.537628& 0.532373& 0.579033& 0.575139& 
0.624441& 0.62253& 0.666788& 0.666614\\
\hline
0.3 &0.412169& 0.432434& 0.434285& 0.447517& 0.46519& 0.472632& 0.503451&0.506498& 0.542598& 0.542848\\
 \hline
0.4&0.372592& 0.374& 0.380263& 0.381825& 0.395196& 0.404867& 0.417699&0.42153& 0.444353& 0.444655\\
\hline
0.5&0.327607& 0.34033& 0.332649&, 0.3394& 0.339513& 0.343085& 0.349921& 0.351605& 0.364058& 0.364235\\
\hline 
0.6&0.306212& 0.31257& 0.301578& 0.304307& 0.298493& 0.299469& 0.298021&0.298306& 0.301282& 0.301305\\
\hline
0.7&0.286626& 0.289477& 0.274366& 0.27529& 0.2632& 0.263246& 0.253624&
  0.253479& 0.247501& 0.24748\\
  \hline
0.8&0.26885& 0.270216& 0.251014& 0.251477& 0.233633& 0.233657& 0.21673& 
  0.216651& 0.202716& 0.202705\\
  \hline
0.9&0.252883& 0.254189& 0.23152&  0.232226& 0.209792& 0.210126& 0.187339&
   0.187457& 0.166926& 0.166935\\
   \hline
\end{tabular}
\end{table}
\begin{table}
\centering
\small\addtolength{\tabcolsep}{-1pt}
\caption{Approximate results of the control function $u({\zeta})$ for $\mu=0.6, 0.7, 0.8, 0.9$ and $ 0.99.$ }
\begin{tabular}{|c|c|c|c|c|c|c|c|c|c|c|c|c|c|c|} \hline
${\zeta}$ &  \multicolumn{2}{|c|}{$\mu=0.6$} &\multicolumn{2}{|c|}{$\mu=0.7$} &\multicolumn{2}{|c|}{$\mu=0.8$}& \multicolumn{2}{|c|}{$\mu=0.9$} &\multicolumn{2}{|c|}{$\mu=0.99$}\\[0.5ex] \cline{2-11}
& OBWs &  FBWs & OBWs &  FBWs &  OBWs &  FBWs &  OBWs &  FBWs &  OBWs &  FBWs \\ \cline{2-11}
\hline
0.1&-0.394877& -0.439851& -0.405583& -0.441411& -0.415234& -0.436168&
-0.424029& -0.428269& -0.431123& -0.432319\\
\hline

0.2&-0.352463& -0.439495& -0.362216& -0.416708& -0.370041& -0.398336&
-0.376184& -0.385921& -0.380349& -0.381092\\
\hline
0.3&-0.313901& -0.381998& -0.321505& -0.358933& -0.326727& -0.343064&
-0.329709&-0.334278& -0.330532& -0.33066\\

\hline
 0.4&-0.288216& -0.267363& -0.281435& -0.268088& -0.284011& -0.270352&-0.283795& -0.27334& -0.28157& -0.281023\\
\hline

0.5&-0.265345& -0.21654& -0.25619& -0.205936& -0.247159& -0.205842&
-0.239332& -0.209313& -0.233211& -0.232294\\
\hline
 0.6&-0.233613& -0.182516& -0.222844& -0.182709& -0.210499& -0.180697&
-0.197964& -0.176436& -0.187214& -0.186489\\
 \hline
 0.7&-0.194706& -0.150884&-0.182512& -0.15294& -0.168483& -0.147376&
-0.15382& -0.137598& -0.140986& -0.140543\\
\hline
 0.8&-0.149777& -0.121644& -0.135998& -0.116629& -0.121471& -0.105878&
-0.106944& -0.0927991& -0.0945155& -0.094456\\
\hline
 0.9&-0.0996635& -0.0947954& -0.0839105& -0.0737776& -0.0697543&
-0.0562032& -0.057378&-0.042039& -0.0477968& -0.0482282\\
\hline
 
\end{tabular}
\end{table}
\begin{table}[h]
\begin{center}
\caption{Absolute errors of the state  and control functions for ${\mu}=1$.} 
\begin{tabular}{|c|c|c|c|c|c|c|} \hline
${\zeta}$ &  \multicolumn{3}{|c|}{$ k=3, M=3 $}  &\multicolumn{3}{|c|}{$ k=3, M=7 $}\\[0.5ex] \cline{2-7}
& $E_{x_{1}({\zeta})}$ & $E_{x_{2}({\zeta})}$ & $E_{u({\zeta})}$ & $E_{x_{1}({\zeta})}$ & $E_{x_{2}({\zeta})}$ & $E_{u({\zeta})}$\\
\hline
0.1&$2.08\times10^{-4}$& $7.11\times10^{-5}$& $1.27\times10^{-5}$& $1.6872\times10^{-9}$& $4.16\times 10^{-5}$&
 $ 2.008\times10^{-6}$\\
 \hline
0.3&$1.85\times10^{-4}$& $5.06\times10^{-5}$&$1.35\times10^{-6}$& $2.01\times10^{-9}$&$3.13\times10^{-5}$& $1.53\times10^{-6}$\\
 \hline
0.2& $2.79\times 10^{-4}$ &$1.90\times10^{-5}$ &$2.34\times10^{-5}$& $2.98\times10^{-9}$& $3.61\times 10^{-5}$&
  $1.75\times10^{-6}$\\
\hline  
 0.4&$1.49\times10^{-4}$&$9.50\times10^{-5}$& $2.72\times10^{-6}$&$7.66\times10^{-10}$ 
 &$2.72\times10^{-5}$&$1.34\times10^{-6}$\\
 \hline
0.5&$3\times10^{-4}$&$1.11\times10^{-5}$& $4.51\times10^{-5}$& $6.74\times10^{-9}$&$2.36\times10^{-5}$& 
  $1.18\times10^{-6}$\\
  \hline
0.6&$7.64\times10^{-5}$&$2.92\times10^{-5}$&$1.56\times10^{-5}$& $6.20\times10^{-10}$&
$2.04\times10^{-5}$&$ 1.046\times10^{-6}$\\
\hline
0.7&
 $1.02\times10^{-4}$&$ 4.62\times10^{-5}$&$1.69\times10^{-5}$& $1.09\times10^{-9}$&
 $1.77 \times 10^{-5}$ & $ 9.30\times10^{-7}$\\
 \hline
0.8&$6.81\times10^{-5}$& $3.61\times10^{-5}$&$2.71\times10^{-5}$& $7.40\times10^{-10}$& 
  $1.53\times10^{-5}$&$8.31\times10^{-7}$\\
  \hline
0.9&$5.51\times10^{-5}$&$ 2.33\times10^{-5}$&$1.91\times10^{-5}$& $2.81\times10^{-10}$& 
 $1.33\times10^{-5}$& $7.49\times10^{-7}$
\\
\hline

\end{tabular}

\footnotesize{$E_{x_{1}({\zeta})}$: Absolute error of the $x_{1}({\zeta})$; $E_{ x_{2}({\zeta})}$: Absolute error of the $x_{2}({\zeta})$; $ E_{u({\zeta})}$: Absolute error of the $ u({\zeta})$.}
\end{center}
\end{table}
\end{landscape}

\newpage

\begin{problem}
Let the following Caputo fractional optimal control of a Spring-Mass-Viscodamper system \textnormal{\cite{dehestani2022numerical}}:
\begin{align*}\nonumber
\min\Tilde{J}=\dfrac{1}{2}\int_{0}^{1}\left(x_{1}^2({\zeta})+x_{2}^2({\zeta})+u^2({\zeta})\right)d{\zeta},
\end{align*}
$$^{C}\mathcal{D}_{{\zeta}}^{{\mu}}x_{1}({\zeta})=x_{2}({\zeta}),$$
$$^{C}\mathcal{D}_{{\zeta}}^{{\mu}}x_{1}({\zeta})+ ^{C}\mathcal{D}_{{\zeta}}^{{\mu}}x_{2}({\zeta})=-x_{1}({\zeta})+u({\zeta}),$$
$$x_{1}(0)=1, x_{2}(0)=0.  $$
\end{problem}

This real-life multi-dimensional FOCP has been solved by the proposed methods using OBWs and FBWs with $k=2$ and $M=4$. Tables 6, 7, and 8 show the approximate results by using OBWs and FBWs methods of $x_{1}(\zeta)$, $x_{2}(\zeta)$ and $u(\zeta)$, respectively, for different values of $\mu$. Table 9 shows the approximate values of the cost function using OBWs and FBWs. Fig. 5 shows the approximate results of the state and control functions for different values of $\mu$ using the OBWs method. Fig. 6 shows the approximate results of the state and control functions for different values of $\mu$ using the FBWs method. It has been observed that the FBWs gives  better results than the OBWs method. 

\begin{table}[H]
\centering
\small\addtolength{\tabcolsep}{-1pt}
\caption{Approximate results of the state function $x_{1}({\zeta})$ for $\mu= 0.7, 0.8, 0.9$ and $ 0.99.$ }
\begin{tabular}{|c|c|c|c|c|c|c|c|c|c|c|c|c|} \hline
${\zeta}$ &\multicolumn{2}{|c|}{$\mu=0.7$} &\multicolumn{2}{|c|}{$\mu=0.8$}& \multicolumn{2}{|c|}{$\mu=0.9$} &\multicolumn{2}{|c|}{$\mu=0.99$}\\[0.5ex] \cline{2-9}
& OBWs &  FBWs & OBWs &  FBWs &  OBWs &  FBWs &  OBWs &  FBWs \\ \cline{2-9}
\hline
0.1&0.966394& 0.966394& 0.981324& 0.980995& 0.989744& 0.989744& 
  0.994214& 0.994204\\
\hline
0.2&0.923771& 0.923771& 0.948154& 0.948506& 0.966845& 0.966845& 
  0.978574& 0.978584\\
\hline
 0.3&0.876228& 0.876228& 0.908281& 0.908386& 0.935634& 0.935634& 
  0.954804& 0.954809\\
\hline
 0.4&0.82555& 0.82555& 0.865355& 0.864927& 0.898809& 0.898809& 0.924446& 
  0.924435\\
\hline
0.5& 0.780842& 0.780842& 0.819575& 0.819628& 0.857818& 0.857818& 
  0.888942& 0.888951\\
\hline
 0.6&0.739379& 0.739379& 0.775472& 0.775207& 0.814731& 0.814731& 
  0.849126& 0.849126\\
\hline
 0.7&0.700506& 0.700506& 0.732208& 0.731885& 0.770292& 0.770292&
  0.806145& 0.806144\\
\hline
 0.8&0.663781& 0.663781& 0.68991& 0.689652& 0.724948& 0.724948& 0.760532&
   0.760533\\
\hline
0.9&0.628884& 0.628884& 0.648715& 0.648496& 0.679117& 0.679117& 
  0.712818& 0.712818\\
\hline
 
\end{tabular}
\end{table}

\begin{table}[h]
\centering
\small\addtolength{\tabcolsep}{-1pt}
\caption{Approximate results of the state function $x_{2}({\zeta})$ for $\mu=0.7, 0.8, 0.9$ and $ 0.99.$ }
\begin{tabular}{|c|c|c|c|c|c|c|c|c|c|c|c|c|} \hline
${\zeta}$ & \multicolumn{2}{|c|} {$\mu=0.7$}& \multicolumn{2}{|c|}{$\mu=0.8$}& \multicolumn{2}{|c|}{$\mu=0.9$} &\multicolumn{2}{|c|}{$\mu=0.99$}\\[0.5ex] \cline{2-9 }
& OBWs &  FBWs & OBWs &  FBWs &  OBWs &  FBWs &  OBWs &  FBWs  \\ \cline{2-9}
\hline
0.1&-0.215705& -0.215705& -0.172919& -0.174955& -0.136316& -0.137135&
-0.107702& -0.107768\\
\hline
 0.2&-0.306565& -0.306565& -0.272313& -0.270388& -0.231912& -0.231105&
-0.196345&-0.196281\\
\hline
 0.3&-0.358994& -0.358994& -0.333866& -0.333714& -0.301393& -0.301298&
-0.268831& -0.268818\\
 \hline
 0.4&-0.394773& -0.394773& -0.3736& -0.374958& -0.352656& -0.353445&
-0.327588& -0.327655\\
 \hline
0.5&-0.405387& -0.405387& -0.404457& -0.404297& -0.392381& -0.392322&
-0.37496& -0.37495
 \\
 \hline
 0.6&-0.41188& -0.41188& -0.421043& -0.420716& -0.420097& -0.420075&
-0.412506& -0.412508\\
\hline
0.7&-0.414907& -0.414907& -0.430212& -0.429804& -0.439822& -0.439739&
-0.44231& -0.442308\\
\hline
0.8&-0.41493& -0.41493& -0.434843& -0.434674& -0.453871& -0.453822&
-0.466024& -0.466022\\
 \hline
0.9&-0.412291& -0.412291&-0.437815& -0.437907& -0.464558& -0.464583&             
-0.485299& -0.485301\\
\hline
\end{tabular}
\end{table}
\begin{table}[h]
\centering
\small\addtolength{\tabcolsep}{-1pt}
\caption{Approximate results of the control function $u({\zeta})$ for $\mu= 0.7, 0.8, 0.9$ and $ 0.99.$ }
\begin{tabular}{|c|c|c|c|c|c|c|c|c|c|c|c|c|} \hline
${\zeta}$ & \multicolumn{2}{|c|}{$\mu=0.7$} &\multicolumn{2}{|c|}{$\mu=0.8$}& \multicolumn{2}{|c|}{$\mu=0.9$} &\multicolumn{2}{|c|}{$\mu=0.99$}\\[0.5ex] \cline{2-9}
& OBWs &  FBWs & OBWs &  FBWs &  OBWs &  FBWs &  OBWs &  FBWs  \\ \cline{2-9}
\hline
0.1&-0.140779& -0.140779& -0.123588& -0.12401& -0.0989385& -0.0991709&
-0.0726188& -0.0726563\\
\hline
0.2&-0.0852573& -0.0852573& -0.0636988& -0.0634089& -0.0386612&
-0.0384181& -0.0140141& -0.0139818\\
\hline
 0.3&-0.0384791& -0.0384791& -0.0149009& -0.0151017& 0.0102628& 0.010171&
   0.0322697& 0.0322735\\
\hline
 0.4&-0.0119472& -0.0119472& 0.0232195& 0.0231515& 0.0475384& 0.0473008& 
  0.0661279& 0.0660906\\
 \hline
 0.5&0.031522& 0.031522& 0.0509791& 0.0540807& 0.073211& 0.0739984&
  0.0874771& 0.0874875\\
\hline
 0.6&0.0572762& 0.0572762& 0.0713499& 0.070753& 0.0871447& 0.087106& 
  0.0960953& 0.0960946\\
 \hline
 0.7&0.0681257& 0.0681257& 0.0822755& 0.0811974& 0.0899385& 0.0897351& 
  0.09208& 0.0920792\\
\hline
0.8&0.0660471& 0.0660471& 0.0782809& 0.0784622& 0.0786118& 0.0786288& 
  0.0750283& 0.0750288\\
\hline
 0.9&0.0525099& 0.0525099& 0.0546616& 0.0555952& 0.0503521& 0.0505307& 
  0.0445306& 0.0445311\\
\hline
 
\end{tabular}
\end{table}
\begin{table}[h]
\centering
\caption{Approximate values of the cost function $\Tilde{{J}}$ for different values of  $\mu$.}
\begin{tabular}{|c|c|c|}\hline
$\mu $& OBWs     & FBWs \\ \cline{1-3}
\hline
1 & 0.454499 & 0.454499\\
\hline
0.99 & 0.452568 & 0.452568\\
\hline
0.9 & 0.434207&  0.434201 \\
\hline
0.8 & 0.412561 &0.412541\\
\hline
0.7 & 0.391139 & 0.391116\\
\hline
 \end{tabular}
\end{table}

\begin{figure}
\centering
\begin{subfigure}{0.4\textwidth}
    \includegraphics[width=\textwidth]{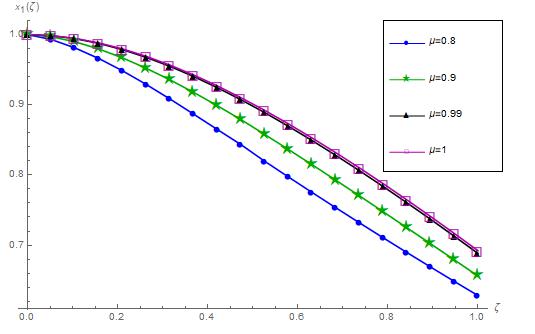}
    \caption{Approximate and exact results of $x_{1}(\zeta)$.}
    \label{5(a)p2}
\end{subfigure}
\hfill
\begin{subfigure}{0.4\textwidth}
    \includegraphics[width=\textwidth]{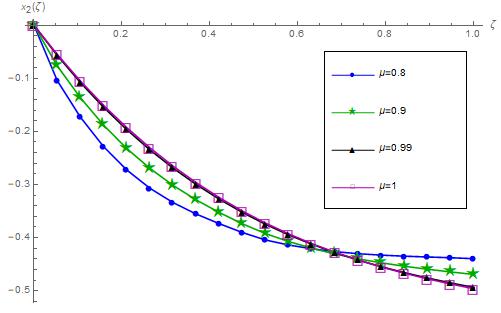}
    \caption{Approximate and exact results of  $x_{2}(\zeta)$.}
    \label{5(b)p2}
\end{subfigure}
\hfill
\begin{subfigure}{0.4\textwidth}
    \includegraphics[width=\textwidth]{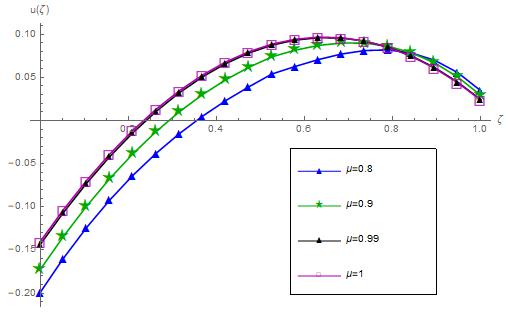}
    \caption{Approximate and exact results of $u(\zeta)$.}
    \label{5(c)p2}
\end{subfigure}
        
\caption{OBWs results when $k=2$ and $M=4$.}
\label{5p2}
\end{figure}
\begin{figure}
\centering
\begin{subfigure}{0.4\textwidth}
    \includegraphics[width=\textwidth]{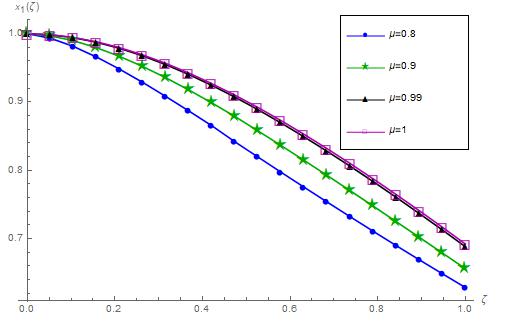}
    \caption{Approximate and exact results of $x_{1}(\zeta)$.}
    \label{6(a)p2}
\end{subfigure}
\hfill
\begin{subfigure}{0.4\textwidth}
    \includegraphics[width=\textwidth]{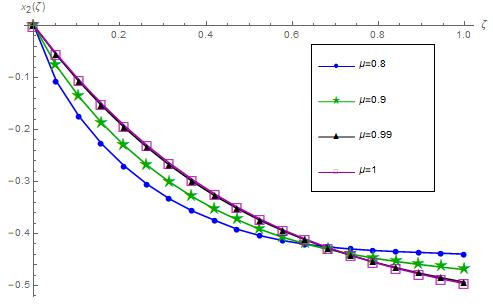}
    \caption{Approximate and exact results of $x_{1}(\zeta)$.}
    \label{6(b)p2}
\end{subfigure}
\hfill
\begin{subfigure}{0.4\textwidth}
    \includegraphics[width=\textwidth]{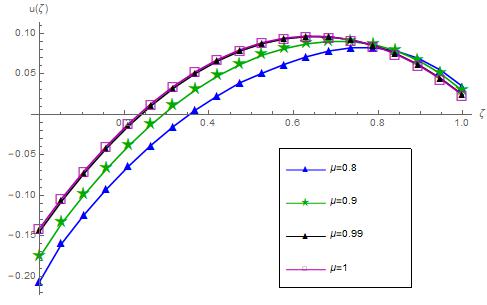}
    \caption{Approximate and exact results of $u(\zeta)$.}
    \label{6(c)p2}
\end{subfigure}
        
\caption{FBWs results when $k=2$ and $ M=4$.}
\label{6p2}
\end{figure}
\newpage

\section{Conclusion}
This paper aims to describe a procedure for computing the numerical result of multi-dimensional fractional optimal control problems. In this work, OBWs and FBWs methods have been used to solve multi-dimensional fractional optimal control problems. As mentioned, the proposed methods transform the multi-dimensional fractional optimal problem into a system of equations that can be solved using the Lagrange multiplier technique. The numerical method based on fractional wavelets is a relatively new study area. Fractional wavelets are piecewise and continuous functions with compact support $[0, 1]$. The novel approach obtained in this paper to solving multi-dimensional fractional optimal control problems with the help of operational matrices is based on using  FBWs. Implementing the proposed method is very simple and effective for solving multi-dimensional fractional optimal control problems. So, the experimental results from the test problems show that the proposed numerical techniques are accurate with high accuracy and require less computation overhead. It has been observed that the FBWs method gives better results than the OBWs method. Furthermore, error estimation and convergence analysis for the proposed numerical approach have also been established.\\\\

\section*{Declarations}

\subsection*{Ethical Approval }
Not applicable.
\subsection*{Competing interests }
The authors declare they have no competing interests in this manuscript.
\subsection*{Authors' contributions}
Akanksha Singh: Writing-Original draft, Conceptualization, Methodology, Investigation,. S. Saha Ray: Supervision, Writing-review  and editing.
\subsection*{Funding}
Not applicable.

\subsection*{Availability of data and materials }
No data were utilised in the study described in the article.

\subsection*{Acknowledgement}
The ``University Grants Commission (UGC)" fellowship scheme provided financial support with NTA Ref. No. 201610127052 is gratefully acknowledged by the first author. \\ 
\\

\bibliographystyle{apacite}
\bibliography{References}
 \end{document}